\newcommand{\norm}[1]{\left\lVert#1\right\rVert}
\newcommand{\inner}[1]{\left\langle#1\right\rangle}
\newcommand{\xstar}{x^{\star}}
\DeclareMathOperator*{\argmin}{argmin}
\newcommand{\bbE}{\mathbb{E}}
\newcommand{\R}{\mathbb{R}}
\newtheorem{lemma}{Lemma}
\newtheorem{theorem}{Theorem}
\newtheorem{remark}{Remark}
\newtheorem{assumption}{Assumption}
\newtheorem{example}{Example}
\newcommand{\keywords}[1]{\par\noindent\textbf{Keywords:} #1}
\title{Accelerated optimization algorithms and ordinary differential equations: the convex non Euclidean case}
\author{
Paul Dobson\\
Maxwell Institute for Mathematical Sciences and Mathematics Department,\\
Heriot-Watt University, Edinburgh, EH14 4AS, UK\\
\texttt{p.dobson\_1@hw.ac.uk}
\and
Jes{\'u}s Mar{\'\i}a Sanz-Serna\\
Departamento de Matem\'aticas, Universidad Carlos III de Madrid,\\
Avenida Universidad 30, 28911, Legan\'es, Madrid\\
\texttt{jmsanzserna@gmail.com}
\and
Konstantinos Zygalakis\\
Maxwell Institute for Mathematical Sciences and School of Mathematics,\\
University of Edinburgh, Peter Guthrie Tait Rd, EH9 3FD, Edinburgh\\
\texttt{k.zygalakis@ed.ac.uk}
}
\date{} 
\begin{document}
\maketitle

\begin{abstract}
We study the connections between ordinary differential equations and optimization algorithms in a non-Euclidean setting. We propose a novel accelerated algorithm for minimizing convex functions over a convex constrained set. This algorithm is a natural generalization of Nesterov's accelerated gradient descent method to the non-Euclidean setting and can be interpreted as an additive Runge-Kutta algorithm. The algorithm can also be derived as a numerical discretization of the ODE suggested by Krichene et al.\ in 2015. We use Lyapunov functions to establish convergence rates for the ODE and show that the discretizations considered achieve acceleration beyond the setting studied by Krichene and his coworkers. Finally, we discuss how the proposed algorithm connects to various equations and algorithms in the literature.
\end{abstract}

\keywords{Lyapunov function, probability simplex, convex optimization, mirror map, gradient descent, accelerated methods}

	\section{Introduction}
	Optimization lies at the heart of many problems in statistics and machine learning. We are interested in solving the following problem
	\begin{equation} \label{eq:min}
		\min_{x \in \mathcal{X}}f(x)
	\end{equation}
	where  \(\mathcal{ X}\subseteq\mathbb{R}^d \) is {\color{black}non-empty}, closed and convex and the objective function \(f\) is convex and continuously differentiable in an open set that contains \(\mathcal{X}\). Numerous different algorithms \citep{P87,BV04,NW06,B17} have been proposed for this problem both in the case where $\mathcal{X}=\mathbb{R}^{d}$ as well as when $\mathcal{X}$ is a proper convex subset of $\mathbb{R}^{d}$. These different algorithms can be classified depending on the type of information they use. For example, first-order methods make use only of the gradient of $f$, $\nabla f$. Second-order methods use additionally second derivatives utilizing in some shape or form the Hessian of $f$, $\nabla^{2} f$ and this enables faster convergence.

	In the last few years, first-order methods have gained in popularity despite their slower convergence rate since data sets and problems have become larger \citep{WR22}. In the unconstrained case, the simplest first-order method is gradient descent which however does not give optimal convergence rates within the class of (strongly) convex and gradient Lipschitz functions \citep{N14}. On the other hand, Nesterov proposed a family of accelerated first-order methods   with optimal convergence rates. In the constrained case, one popular algorithm for its simplicity is projected gradient descent \citep{B14}, which is the composition of a gradient descent step and a projection {\color{black} onto}  $\mathcal{X}$. On the other hand mirror gradient  descent \citep{B17}, which is an adaptation of gradient descent with the Euclidean distance replaced by a Bregman divergence, avoids calculating such projection. Nevertheless, both methods fail to yield optimal convergence rates for convex and gradient Lipschitz functions. Two examples of accelerated methods in this constrained setting are the methods proposed in \cite{KBB15} and \cite{T08}.
	
	In a different direction, in the last few years, there has been a renewed interest in connecting optimization algorithms with ordinary differential equations (ODEs). This has been partially driven by the desire to understand better the acceleration phenomenon, starting with the seminal paper of  \cite{SBC16} that showed in the convex case Nesterov's accelerated method corresponds to a discretization of a particular second-order ODE. This result sparked a lot of subsequent research on the links between optimization and numerical solutions of ODEs \citep{Scieur:2016,WRJ21} as well as borrowing ideas from dynamical systems and control theory to prove convergence of optimization algorithms \citep{WRJ21,LRP16,FRMP18}.
	
	An important question  is why certain discretizations of second-order ODEs like the one that appeared in \cite{SBC16}  may or may not accelerate.  The papers  \citep{SDJ18,SDS19}  explain the behaviour of different algorithms using the high-resolution ODEs framework. {\color{black} In particular, for each optimization algorithm an ODE is derived that follows the dynamics of the corresponding algorithm closely and then it is analysed to highlight the difference between the different optimization algorithms.}
	
	Following a different direction \cite{SSKZ21}  {\color{black} highlighted the importance of \emph{stability} {\color{black} when} constructing numerical discretizations that yield acceleration. More precisely, using ideas from integral quadratic constraints \citep{LRP16, FRMP18}, a Lyapunov function was constructed for a family of numerical discretizations of an appropriate second order ODE. Due to consistency, the corresponding Lyapunov functions of the different numerical discretization converge to the Lyapunov function of that ODE as the learning rate tends to zero. However,  in order for a numerical discretization to be able to achieve acceleration, consistency is not sufficient and there is a stability issue:  one needs to be able to construct a discrete Lyapunov function for suitably large learning rates, which is not the case for the majority of the numerical discretizations. A set of conditions were derived   that a numerical discretization should satisfy to yield acceleration.}  Furthermore,  these ideas are extended in \cite{DSZ24} to obtain sharper convergence rates for the Nesterov's accelerated method in the strongly convex case as well as giving an interpretation of it as an additive Runge-Kutta {\color{black} numerical integrator} \citep{Cooper80,Cooper83}.


{\color{black}
The papers we have just discussed, relating ODEs, their numerical discretizations and optimisation algorithms, only consider the Euclidean setting.	
  An early important reference  that addresses these issues in non-Euclidean contexts is \cite{KBB15}. This paper generalises the ODE from \cite{SBC16}  using appropriate Bregman divergences and  proposes a discretization  which uses  a suitable regularizing function and inherits the favourable acceleration properties of the ODE. An alternative discretisation of this ODE based on variable splitting was very recently proposed in \cite{Mirror26}, while  \cite{YXZ25} explained the  convergence properties of the algorithm in \cite{KBB15}  using the framework of high resolution ODEs  \cite{SDJ18,SDS19}. An alternative way to generalise the ODE from \cite{SBC16} is given in
\cite{WRJ21}, which is analysed using Lagrangians that contain appropriate Bregman divergences.}
	
	
	In this work, we provide a novel discretization of the ODE appearing in \cite{KBB15} leading to an accelerated optimization method for the problem \eqref{eq:min} without requiring the use of a regularisation function. The proposed method is the natural extension of Nesterov's method in the non-Euclidean setting and corresponds to an additive Runge Kutta discretization of the underlying non-Euclidean ODE. Furthermore, we extend the analysis from \cite{KBB15} both for the ODE dynamics as well as their discretizations to be able to deal with scenarios not previously covered, for example in the case of simplex when the minimizer is on the boundary. Finally, we make explicit connections between the various non-Euclidean ODEs \citep{KBB15,WRJ21} and optimization algorithms \citep{T08}. 	
	
	The rest of the paper is organised as follows. In Section \ref{sec:Euclidean_setting} we revisit the problem when $\mathcal{X}=\mathbb{R}^{d}$ {\color{black}and the setting is Euclidean and recall some aspects of} the connection between ODEs and optimization algorithms. We then in Section \ref{sec:mirror_sec}  discuss in detail a natural generalization of gradient flow and gradient descent in the non-Euclidean setting.
	Section \ref{sec:non-Euclidean} contains our main results. In particular, after summarizing the results in \cite{KBB15}, we propose our novel algorithm and explain why it generalizes Nesterov's method in the non-Euclidean setting as well as its connection with additive Runge-Kutta methods. Furthermore, we establish convergence for our algorithm in the setting proposed in \cite{KBB15} and extend these results in a more general setting. Several numerical experiments are presented in {\color{black} Section} \ref{sec:numerics} that illustrate the behaviour of our proposed method as well as compare it with the method in \cite{KBB15}. {\color{black}There is an appendix devoted to discussing the relations between the ODEs and the algorithms in this paper and counterparts, previously considered in the literature, that use only primal variables.}

	\section{ODEs and optimization in the Euclidean setting}\label{sec:Euclidean_setting}
	In this preparatory section we consider the particular case where in \eqref{eq:min} \(\mathcal{X}=\R^d\) and \(\R^d\) is endowed with the standard Euclidean norm.
	
	\subsection{Gradient flow and gradient descent}
	The simplest ODE associated with the problem \eqref{eq:min} is given by the gradient flow
	\begin{equation} \label{eq:gf}
		\dot{x}(t)=-\nabla f(x(t)).
	\end{equation}
	When \eqref{eq:min} has a minimizer \(x^\star\), \(f(x(t))-f(x^\star)\) approaches \(0\) at a rate \(\mathcal{O}(1/t)\), as it may be proved e.g. by means of the Lyapunov function
	\begin{equation} \label{eq:Lyap1}
		V(x,t)= t(f(x)-f(\xstar))+\frac{1}{2} \norm{x-\xstar}^{2}.
	\end{equation}
	In fact, since \(V\) is nonincreasing along solutions of \eqref{eq:gf}, one has
	\[
	f(x(t))-f(\xstar) \leq \frac{1}{t} V\big(x(0),0\big)= \frac{1}{2t}\norm{x(0)-\xstar}^{2}, \qquad t >0.
	\]

	The simplest method to discretize \eqref{eq:gf} is the explicit Euler rule, that leads to the standard gradient descent algorithm
	\begin{equation} \label{eq:gd}
		x_{k+1} = x_{k}-h \nabla f(x_{k}),
	\end{equation}
	where $h$ is the timestep/learning rate.
	Since $x_{k}$ {\color{black}is an approximation to} $x(kh)$  (the solution of the differential equation at time $t=kh$), {\color{black} it is plausible to expect} from the continuous time analysis that $f(x_{k})-f(\xstar)$ would decay like $1/k$. In fact, when \(f\) is \(L_f\)-smooth, i.e.
	\[
	\forall x,y\in\R^d, \qquad  \norm{ \nabla f(x)-\nabla f(y)} \leq L_f \norm{x-y},
	\]
	and \(h\leq 1/L_f\), the \(\mathcal{O}(1/k)\) decay may be proved
	by using the discrete Lyapunov function
	\[
	V_{k}(x)= k h(f(x)-f(\xstar))+\frac{1}{2} \norm{x-\xstar}^{2},
	\]
	which is the obvious counterpart of \eqref{eq:Lyap1}.  A more sophisticated Lyapunov function valid for \(1/L_f\leq h< 2/L_f\) may be seen in \cite{FRMP18}.
	
	There is no need to derive \eqref{eq:gd} by seeing it as an integrator for an ODE. The algorithm may be written as
	\begin{equation} \label{eq:maxmin}
		x_{k+1}=\argmin_{x\in\mathbb{R}^d}  \left \{f(x_{k})+\inner{\nabla f(x_{k}),x-x_{k}}+\frac{1}{2h} \norm{x-x_{k}}^{2} \right \},
	\end{equation}
	with a clear optimization interpretation: \(f(x_{k})+\inner{\nabla f(x_{k}),x-x_{k}}\) is the linear approximation to the objective function and \(({1}/(2h)) \norm{x-x_{k}}^{2}\) {\color{black} may be understood as a penalty term and also as the best quadratic approximation to \(f\) that does not require computing its Hessian}.
	
	\subsection{Acceleration}
	Algorithms with acceleration for convex objective functions, including the celebrated Nesterov method \citep{N14}, raise to \(\mathcal{O}(1/k^2)\) the \(\mathcal{O}(1/k)\) rate of convergence of gradient descent.
	Even though no system of ODEs was originally used to derive  Nesterov's algorithm, the well-known reference
	\cite{SBC16} showed the relation between the algorithm and the second-order ODE
	\[
	\ddot{x}(t)+\frac{r+1}{t}\dot{x}(t)+\nabla f(x(t))=0,
	\]
	where \(r\geq 2\).
	For our purposes, it is useful to rewrite the equation as a first order system
	\begin{subequations} \label{eq:Polyak_first}
		\begin{eqnarray}
			\dot{z}(t) &=& - \frac{t}{r} \nabla f(x(t)), \\
			\dot{x}(t) &=& \frac{r}{t}\left(z(t)-x(t) \right),
		\end{eqnarray}
	\end{subequations}
	with Lyapunov function
	\[
	V(x,z,t)=\frac{t^{2}}{r^{2}}(f(x)-f(\xstar))+\frac{1}{2}\norm{z-\xstar}^{2},
	\]
	which implies the following \(\mathcal{O}(1/t^2)\) decay estimate of $f(x(t))$ towards the optimal value $f(\xstar)$
	\[
	f(x(t))-f(\xstar) \leq \frac{r^2}{t^2} V\big(x(0),z(0),0\big)= \frac{r^2}{2t^2}
	\norm{z(0)-\xstar}^2, \qquad t>0.
	\]
	
	Following the line of thought in the preceding subsection, one would expect that integrators $(z_{k},x_{k}) \mapsto (z_{k+1},x_{k+1})$ for \eqref{eq:Polyak_first},
	where $(z_{k},x_{k})$ approximate $(z(k\delta), x(k\delta))$ ($\delta$ is the time-step), may offer the potential of yielding optimization algorithms for which $f(x_{k})-f(\xstar)$ decays like $1/k^{2}$, i.e.\ algorithms that show acceleration. Unlike the case of the gradient flow, this is not a simple task since standard discretizations such a Runge-Kutta algorithms do not lead to acceleration, see the discussion in \cite{SSKZ21} and \cite{DSZ24}. In addition, even for discretizations with acceleration, a Lyapunov function of the ODE, may not work for the discrete algorithm.
	
	
	As proved in a more general setting in Section~\ref{subsec:alg_new}, if
	\(\{\gamma_k\}_{k=0}^\infty\) is a sequence with \(\gamma_0=1\), \(\gamma_k \geq 1\),  \(k= 1, 2,\dots\), the algorithm
	\begin{subequations}\label{eq:nest_euclidean}
		\begin{eqnarray}
			\label{eq:algyeuc}
			y_k &=& x_k +\frac{1}{\gamma_k} (z_k-x_k),\\
			\label{eq:algzeuc}
			z_{k+1}&=& z_k -\gamma_k h\nabla f(y_k),\\
			\label{eq:algxeuc}
			x_{k+1} &=& y_k +\frac{1}{\gamma_k}(z_{k+1}-z_k),
		\end{eqnarray}
	\end{subequations}
	provides, for suitable choices of the learning rate \(h\) and  the coefficients \(\gamma_k\), a \emph{consistent}, albeit nonstandard, discretization of \eqref{eq:Polyak_first} that leads to acceleration. {\color{black} (The standard definition of \emph{consistency}  may be seen in most numerical analysis text-books, see e.g. \cite[Definition 12.1]{suli}.)}
	\begin{figure}[t!]
		\centering
		\includegraphics[width=0.6\textwidth]{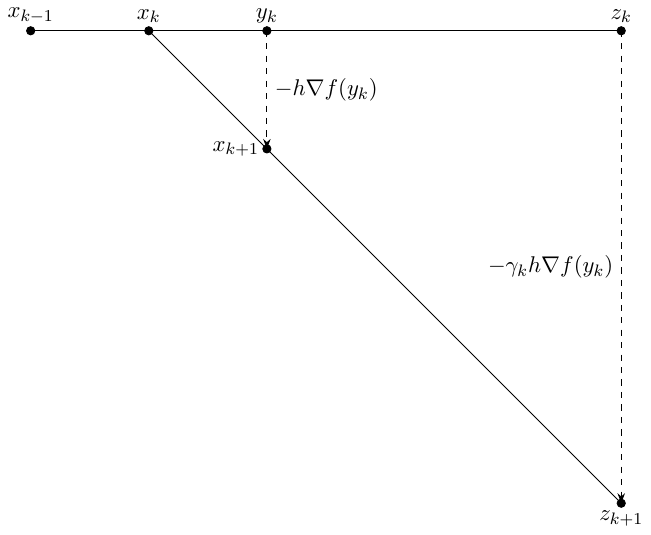}
		\caption{An illustration of one step of the Nesterov algorithm in a Euclidean setting}
		\label{fig:Nesterov_diag}
	\end{figure}
	Figure \ref{fig:Nesterov_diag} illustrates one step of \eqref{eq:nest_euclidean}. The point $y_k$ is determined as a convex combination of $x_k$ and  $z_k$,  then $z_{k+1}$ is obtained by moving from $z_k$ in the direction of the gradient and finally $x_{k+1}$ is obtained as a convex combination of $x_k$ and $z_{k+1}$.
	
	The algorithm may be reformulated by eliminating the variable \(z\). Using   \eqref{eq:algzeuc} and \eqref{eq:algxeuc}, we obtain
	\begin{equation}\label{eq:euclideo1}
		x_{k+1} = y_k-h\nabla f(y_k)
	\end{equation}
	and, after setting \(z_k = x_k+(\gamma_{k-1}-1)(x_k-x_{k-1})\),  \eqref{eq:algyeuc} becomes, for \(k\geq 1\),
	\begin{equation}\label{eq:euclideo2}
		y_k = x_k+\beta_{k-1}(x_k-x_{k-1}),\qquad \beta_{k-1} = (\gamma_{k-1}-1)/\gamma_k,
	\end{equation}
	(\(y_0=x_0\)).
	In the formulation \eqref{eq:euclideo1}--\eqref{eq:euclideo2}, one first computes $y_k$ by extrapolation from $x_{k-1}$ and $x_k$ and then moves from $y_k$ to $x_{k+1}$ by a gradient descent substep.
	The relations \eqref{eq:euclideo1}--\eqref{eq:euclideo2} are the well-known formulas for the accelerated Nesterov method
	\citep{N14} when written as a three-term recursion linking \(x_{k-1}\), \(x_k\) and \(x_{k+1}\) as in e.g.\ \cite{FRMP18}, with
	\begin{equation}\label{eq:fancygamma}
		\gamma_k =\frac{1}{2} \big(1+\sqrt{1+4\gamma_{k-1}^2}\big ),\quad k=1,2,\dots, \qquad\gamma_0 = 1.
	\end{equation}

	\section{Non-Euclidean optimization}\label{sec:mirror_sec}
	It is well known that, in many instances (see e.g.\ the discussion in \cite[Example 9.19]{B17}), it is useful to consider the problem \eqref{eq:min} when the norm in \(\R^d\) is not Euclidean.  In what follows, \( \bbE\) denotes \(\R^d\) endowed with an {\color{black}\emph{arbitrary}} norm \(\|\cdot\|\) and \( \bbE^\star\) is the dual space  with dual norm  \(\|\cdot\|_\star\). The map \(\langle \cdot,\cdot\rangle:\bbE^\star\times\bbE\rightarrow \R\) denotes the standard   pairing between \(\bbE^\star\) and \(\bbE\), i.e.\ the real number \(\langle \zeta,  x\rangle=\sum_j\zeta_jx_j\) is the value of the linear form \(\zeta \in\bbE^\star\) acting on the vector \(x\in\bbE\).
	
	\subsection{Mirror gradient ODE and mirror descent}\label{sec:mirror}
	In the non-Euclidean setting, the gradient flow equation \eqref{eq:gf} is meaningless because \(\dot x(t)\in \bbE\) is a primal vector and \(\nabla f(x(t))\in\bbE^\star\) is a dual vector.  Nemirovsky and Yudin \citep{NY83} suggested alternative ODEs of the form
	\begin{equation} \label{eq:mirror_gf1}
		\dot{\zeta}(t)=-\nabla f(\chi(\zeta(t))), \qquad x(t)=\chi(\zeta(t)),
	\end{equation}
	where \(\zeta\) takes values in \(\bbE^\star\) and \(\chi\) maps \(\bbE^\star\) into \(\bbE\). The dynamics take place in the dual space and the primal variable \(x\) just \emph{mirrors} the behaviour of \(\zeta\); for this reason, \(\chi\) is referred to as the \emph{mirror map}. In \cite{NY83}, it is assumed that \(\chi=\nabla \psi^\star\), where \(\psi^\star:\bbE^\star\rightarrow \R\) is a differentiable function
	which is used to construct Lyapunov functions for \eqref{eq:mirror_gf1}. There is much freedom  in the choice of \(\psi^\star\) \citep{NY83}; throughout this paper the attention is restricted to cases where the following standing requirement is met.
	\begin{assumption} \label{as:cond_mir1}
		The  mirror map satisfies $\chi=\nabla \psi^{\star}$ for some convex and differentiable function $\psi^{\star}: \bbE^{\star} \rightarrow \mathbb{R}$  and takes values in $\mathcal{X}$. In addition, {\color{black} \(\chi\)} is \(L_\chi\)-{\color{black} Lipschitz continuous} i.e.\ for each \(\xi,\zeta\in\bbE^\star\):       %
		\[ \|\chi(\xi)- \chi(\zeta)\| \leq L_\chi \|\xi-\zeta\|_\star.\]
	\end{assumption}
	
	If \(D_{\psi^\star}\) denotes the corresponding Bregman divergence, so that for \(\xi,\zeta\in\bbE^\star\),
	\[
	D_{\psi^\star}(\xi,\zeta) =  \psi^\star(\xi)-\psi^\star(\zeta)-\langle \xi-\zeta, \nabla \psi^\star(\zeta) \rangle,
	\]
	and \(\zeta^\star\) is such that \(x^\star = \chi(\zeta^\star)\) is a minimizer of \eqref{eq:min}, it is easy to prove (see e.g.\ \cite{KBB15}) that
	\begin{equation}\label{eq:dstar}
		\frac{d}{dt} D_{\psi^\star}(\zeta(t),\zeta^\star)\leq -\big(f(x(t))-f(x^\star)\big)\leq 0,
	\end{equation}
	a fact that may be used to establish convergence \citep{NY83}.
	
	Some {\color{black} important settings where Assumption \ref{as:cond_mir1} holds are now described}.
	\begin{itemize}
		\item \emph{Unconstrained Euclidean case.} Here \({\cal X} = \bbE\), the norms \(\|\cdot\|\) and \(\|\cdot\|_ \star\) are Euclidean, and \(\psi^\star(\cdot) = (1/2)\|\cdot\|^2 \). In this case, the spaces \(\bbE\) and \(\bbE^\star\) may be identified with one another, \(\chi\) is the identity, \(D_{\psi^\star}(\xi,\zeta) = (1/2)\|\xi-\zeta\|^2\) for each \(\xi\) and \(\zeta\), and the pairing \(\langle\cdot,\cdot\rangle\) may be identified with the Euclidean inner product.
		The ODE \eqref{eq:mirror_gf1} reduces to the gradient flow equation \eqref{eq:gf}
		\item \emph{The simplex.} Here \(\cal{X}\) is the probability simplex
		\[
		\Delta = \Big\{ x=(x_1,\dots,x_d)\in\R^d: \sum_{j=1}^d x_j = 1,\:\:\: x_j \geq 0,\: j=1,\dots,d\Big\}.
		\]
		Choosing
		\(\psi^\star(\zeta) = \log \sum_j e^{\zeta_j}\),
		the \(i\)-component of \(\chi(\zeta)\) equals  \( e^{\zeta_i}/\sum_j e^{\zeta_j}\).
		The mirror map is \(1\)-smooth  \cite[Example 5.15]{B17} when \(\|\cdot\|_ \star\) is either the \(\ell^\infty\) or the \(\ell^2\) norm (in which case \(\|\cdot\|\) is respectively the \(\ell^1\) or the \(\ell^2\) norm).
		\item \emph{The hypercube.} \({\cal X}= [0,1]^d\). One may choose
		\(\psi^\star(\zeta)=\sum_j \log(e^{\zeta_j}+1)\),
		so that the \(i\)-th component of  \(\chi(\zeta)\) is \(e^{\zeta_i}/(e^{\zeta_i}+1)\). Now \(\chi\) is a diffeomorphism of \(\bbE^\star\) onto the interior
		\((0,1)^d\) of \(\cal X\). If \(\|\cdot\|\) is any of the \(\ell^p\)    norms, \(p\in[1,\infty]\),  it is easy to check that \(L_\chi = 1/4\).
	\end{itemize}
	
	The Euler discretization of \eqref{eq:mirror_gf1} yields the following mirror descent algorithm
	\begin{equation}\label{eq:descentdual}
		\zeta_{k+1} = \zeta_k -h \nabla f(\chi(\zeta_k)),\qquad x_{k+1} = \chi(\zeta_{k+1}).
	\end{equation}
	As in the ODE, the primal variable \(x\) just mirrors the evolution of the dual variable \(\zeta\).
	
	\subsection{Writing the mirror flow ODE and mirror descent in the primal space}\label{ss:rewritingprimal}
	
	We now write the ODE \eqref{eq:mirror_gf1} in terms of the primal variable \(x\). If \(\chi\) is differentiable with Jacobian $\chi'$, \eqref{eq:mirror_gf1} implies:
	\begin{equation}\label{eq:odeaux}
		\dot{x}(t) =-\chi^\prime(\zeta(t))\nabla f(x(t)).
	\end{equation}
	This equation  still contains \(\zeta\);
	in order to eliminate \(\zeta\), we have to demand that the mirror map, that we recall it is assumed throughout to satisfy Assumption \ref{as:cond_mir1}, has additional properties {\color{black}listed in Assumption \ref{ass:two} below. We first present some notation.

	The symbols \(\cal A\), \(\cal V\) will denote respectively the \emph{affine hull of \(\cal X\) and the corresponding linear subspace}:
	\begin{align*}
		\mathcal{V}  &= \mathrm{span}\{x-z:x,z\in\mathcal{X}\},\\
		\mathcal{A} &= \mathcal{X}+\mathcal{V}.
	\end{align*}
	For the simplex, \(\cal A\) has the equation \(\sum_j x_j= 1\) and \(\cal V\) consists of all vectors with \(\sum_j x_j = 0\). For Euclidean case and the hypercube \(\cal A\) and \(\cal V\) are the whole space \(\bbE\).

\begin{table}\color{black}
\begin{tabular}{c|c|c|c}
&Euclidean & Simplex & Hypercube\\\hline
$\cal X$ & $\R^d$ & $\sum_j x_j = 1$, $x_j \geq 0$, \(\forall j\)& $0\leq x_j\leq 1$, \(\forall j\)\\
ri($\cal X$)& $\R^d$ & $\sum_j x_j = 1$, $x_j > 0$, \(\forall j\)& $0 < x_j<1$, \(\forall j\)
\\
$\phi$ & $(1/2) \|\cdot\|^2$
& $ \begin{cases}\sum_j x_j\log x_j, & {\rm if\ }  x_j\geq 0, \forall j\\ \infty & {\rm else}\end{cases}$
& $\begin{cases}\sum_j \big(x_j\log x_j+(1-x_j)\log(1-x_j)\big), &{\rm if\ }  0\leq x_j\leq 1, \forall j\\ \infty &{\rm else}\end{cases}$
\\
$\psi^\star$ & $(1/2) \|\cdot\|^2$ & $\log \sum_j e^{\zeta_j}$ & $\sum_l \log(e^{\zeta_j}+1)$
\\
$\chi_i$& $\zeta_i$ & $e^{\zeta_i}/\sum_j e^{\zeta_j}$ & $e^{\zeta_i}/\sum_j (e^{\zeta_j}+1)$
\end{tabular}
\caption{\color{black} Summary of some possible settings. The notation $\chi_i$ refers to the $i$-th component of the mirror map $ \chi = \nabla \psi^\star$, with
$\psi^\star$ the convex conjugate of \(\phi+\delta_{\cal X}\) }
\label{table}
\end{table}

The notation \({\cal N}\) refers to the vector subspace of \(\bbE^\star\) \emph{orthogonal} to \({\cal V}\subseteq \bbE\); thus \(\zeta\in \cal N\) if and only if \(\langle \zeta, x\rangle= 0\) for each \(x\in\cal V\), or, equivalently, if and only if \(\langle \zeta, x_1-x_2\rangle= 0\) for all \(x_1\), \(x_2\) in \(\cal A\). For the simplex, \(\cal N\) is spanned by the vector \(\bf 1\) whose entries are all equal to \(1\) (see the left panel in Figure~\ref{fig:simplex_diagram}). For the Euclidean case and the hypercube, \({\cal V} = \bbE\) and therefore \({\cal N} = \{0\}\).

The notation  \({\rm ri}({\cal X})\) stands for the \emph{relative interior} of \(\cal X\), i.e. for the interior of  \(x\in{\cal X}\) when viewed as a subset of the topological space \(\cal A\); this is always nonempty as \(\cal X\) is nonempty and convex \citep{B17}.
 For the simplex, the relative interior is the
the subset \(\Delta_+\) of \(\Delta\) consisting of points with  positive components. For the hypercube, the relative interior consists of the points  with components \(0<x_i<1\).

As it is standard, \(\delta_{\cal X}\) denotes the \emph{indicator function} that vanishes in \(\cal X\) and takes the value \(\infty\) elsewhere and the \emph{convex conjugate} of  a function \(F:\bbE\rightarrow [-\infty,\infty]\)  is the function \(F^\star:\bbE^\star\rightarrow [-\infty,\infty]\) defined by:
\begin{equation*}
F^\star(\zeta) = {\rm max}\{\langle \zeta,x\rangle-F(x): x\in\bbE\}\qquad \zeta\in\bbE^\star.
\end{equation*}

We are now prepared to present the following additional assumption, that will be required ---in addition to Assumption \ref{as:cond_mir1}, which is assumed throughout--- in \emph{some} of the results to be presented:
}	
	\begin{assumption}\label{ass:two}The mirror map \(\chi=\nabla \psi^\star\) is differentiable and its
		image  is the relative interior \({\rm ri}({\cal X})\). In addition, \(\psi^\star\) is the convex conjugate of a function \(\psi=\phi+\delta_{\cal X}\), where \(\phi:\bbE\rightarrow (-\infty,\infty]\) is  proper, convex,  differentiable over \({\rm ri}({\cal X})\) and satisfies \({\cal X}\subseteq {\rm dom}(\phi)\).
	\end{assumption}
	
This additional assumption holds in the three {\color{black} particular instances previously}  considered. In the Euclidean setting, \(\phi(\cdot) = (1/2)\|\cdot\|^2\). For the simplex, \(\phi\) is given by\begin{equation}\label{eq:negent}
		\phi(x) = \sum_{j=1}^n x_i\log x_i
	\end{equation}
	(negative entropy) if \(x\) is in the nonnegative orthant, with \(\phi(x)= \infty\) else. For the hypercube,
	\[
	\phi(x) = \sum_{j=1}^n \big(x_i\log x_i+(1-x_i)\log(1-x_i)\big),
	\]
	(negative bit entropy) if \(x\in[0,1]^d\), with \(\phi(x)= \infty\) else. See Table~\ref{table} for a summary.

	We next present a lemma that we shall use repeatedly.
	
	\begin{figure}
		\centering
		{\includegraphics[width=0.45\textwidth]{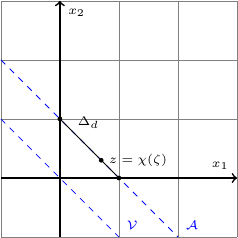}}\hfill
		{\includegraphics[width=0.45\textwidth]{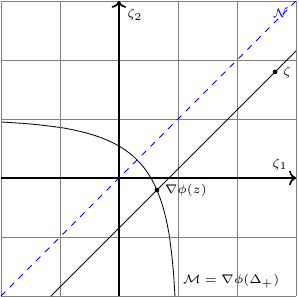}}\hfill
		\caption{Lemma~\ref{lemma} for the case of the simplex. The left and right panels correspond to the primal and dual spaces. Each point $\zeta\in \bbE^\star$ is mapped by $\chi$ into a point $z=\chi(\zeta)\in{\rm ri}({\cal X})=\Delta_{+}$; the image $\nabla \phi(z)$ does not in general coincide with $\zeta$, but $\zeta$ and $\nabla \phi(z)$ differ in an element in $\cal N$. The straight lines of slope 1 partition the dual space; each line is mapped into a single point by $\chi$.}
		\label{fig:simplex_diagram}
	\end{figure}
	
	\begin{lemma}\label{lemma}
		If Assumptions \ref{as:cond_mir1} and \ref{ass:two} hold, then:
		\begin{enumerate}
			\item As \(z\) varies in \({\rm ri}({\cal X})\), the inverse images by the mirror map \(\chi^{-1}(z)\)\( = \{\zeta\in\bbE^\star:\chi(\zeta) = z\}\) provide a partition of  \(\bbE^\star\) into pairwise disjoint sets.
			\item For each \(z\in{\rm ri}({\cal X})\),  its inverse image \(\chi^{-1}(z)\) is the affine set  \(\nabla \phi(z)+{\cal N}\). In particular, \( \nabla \phi(z) \in \chi^{-1}(z)\), i.e.\ \(\chi(\nabla \phi(z)) = z\).
			\item If \(\zeta\in\bbE^\star\), then \(\nabla\phi(\chi(\zeta))-\zeta \in \cal N\). Therefore
			for each \(x_1\), \(x_2\) in the affine hull \(\cal A\) of \(\cal X\),{\color{black}
			\begin{equation}\label{eq:partthree}\langle \nabla\phi(\chi(\zeta))-\zeta, x_1-x_2\rangle = 0.\end{equation}}
			\item If  \(\zeta\in\bbE^\star\) and \(\eta\in\cal N\), then{\color{black}
			\begin{equation}\label{eq:partfour}
			\chi(\zeta) = \chi(\zeta+\eta),\qquad \chi^\prime(\zeta) = \chi^\prime(\zeta+\eta).
			\end{equation}}
		\end{enumerate}
	\end{lemma}
	\begin{proof}
		
	\emph{Part 1.} This is a trivial consequence of the fact that \(\chi\) maps \(\mathbb{E}^\star\) onto \({\rm ri}({\cal X})\).
	
	\noindent\emph{Part 2.} Fix \(z\in {\rm ri}({\cal X})\). Since \(\chi = \nabla \psi^\star\), by a well-known result on conjugate functions, see e.g.\ \citep[Theorem 4.20]{B17}, the relation \(\chi(\zeta) = z\) is equivalent to \(\zeta\in \partial \psi(z)\). It is then sufficient to prove that
	\(\partial \psi(z)=\nabla \phi(z)+{\cal N} \).
	By definition, \(g\in\partial \psi(z)\) means that, for each \(x\in\mathbb{E}\),
	\(
	\psi(x) \geq \psi(z) +\langle g, x-z\rangle
	\). This inequality trivially holds if \(x\notin \cal X\), because then \(\psi(x) = \phi(x)+\delta_{\cal X}(x)=\infty\). Therefore \(g\in\partial \psi(z)\) if and only if, for \(x\in\cal X\), \(
	\psi(x) \geq \psi(z) +\langle g, x-z\rangle
	\), that is \(
	\phi(x) \geq \phi(z) +\langle g, x-z\rangle
	\). On the other hand, since \(\phi\) is differentiable at \(z\), the linear approximation at \(z\) based on the use of \(\nabla \phi(z)\) given by \(\ell(x) = \phi(z) +\langle g, x-z\rangle\) is unique in satisfying \(\phi(x) \geq \ell(x)\) for each \(x\in\mathbb{E}\). In this way
	\(g\in\partial \psi(z)\) if and only if
	\( \langle g, x-z\rangle = \langle \nabla\phi(z),x-z\rangle\)
	for each \(x\in\cal X\), which in turn is clearly equivalent to \(g-\nabla\phi(z)\in\cal N\).
	
	\noindent\emph{Part 3.} We have \(\zeta\in \chi^{-1}(\chi(\zeta))\), so that, by Part 2, \(\zeta\in\nabla\phi(\chi(\zeta))+{\cal N}\) or
	\(\nabla\phi(\chi(\zeta))-\zeta \in\cal N\).
	
	\noindent\emph{Part 4.} By Part 2,  \(\chi^{-1}(\chi(\zeta))\) is an affine set associated to the vector space \(\cal N\). Therefore the relations \(\zeta\in\chi^{-1}(\chi(\zeta))\) and \(\eta\in \cal N\) imply \(\zeta+\eta \in \chi^{-1}(\chi(\zeta))\), so that \(\chi(\zeta)=\chi(\zeta+\eta)\). Differentiation leads to \(\chi^\prime(\zeta)=\chi^\prime(\zeta+\eta)\) .
	\end{proof}

	The lemma shows that there is a bijection  between points \(z \in {\rm ri}({\cal X})\) and sets  \(\chi^{-1}(z) = \nabla \phi(z)+{\cal N}\subseteq \bbE^\star\).
	There are two essentially different scenarios:
	\begin{enumerate}
		\item \({\cal N} = \{0\}\). Each set \(\nabla \phi(z)+{\cal N}\) consists of the single point \(\nabla\phi(z)\). In other words,
		the mapping \(\nabla \phi\) restricted to  \({\rm ri}({\cal X})\) is the inverse of \(\chi\) and there is a one-to-one correspondence \(z=\chi(\zeta)\), \(\zeta = \nabla\phi(z)\), between points \(z\in {\rm ri}({\cal X})\) and points \(\zeta\in\bbE^\star\).
		\item \({\cal N} \neq \{0\}\). In this case, for each \(z\in{\rm ri}({\cal X})\), the affine set \(\nabla\phi(z)+\cal N\) has dimension \({\rm dim}({\cal N})>0\). The point \(\nabla \phi(z)\) is in the set \(\nabla\phi(z)+\cal N\) but it does not belong to any set \(\nabla\phi(z^\prime)+\cal N\) if \(z^\prime\neq z\).
		The mapping \(z\in {\rm ri}({\cal X}) \rightarrow\nabla \phi(z)\) provides a one-to-one {\color{black} \emph{parameterization of a manifold \(\cal M\)}}
		of dimension \({\rm dim}(\bbE^\star)-{\rm dim}({\cal N})\). See Figure~\ref{fig:simplex_diagram} for the simplex with $d=2$, where
		\(\cal M\) is a curve.
	\end{enumerate}

	We may now rewrite \eqref{eq:mirror_gf1} in the primal space when the additional Assumption \ref{ass:two} holds.
	We consider two situations.
	\begin{itemize}
		\item \emph{The case \({\cal N} =\{0\}\).} The one-to-one correspondence between \(x=\chi(\zeta)\) and \(\zeta=\nabla \phi(x)\) may be used to
		express
		\eqref{eq:mirror_gf1} as
		\begin{equation}\label{eq:odeaux2}
			\frac{d}{dt} \nabla \phi(x) = -\nabla f(x).
		\end{equation}
		
		When \(\cal N\) is not reduced to \(\{0\}\), this differential equation is meaningless because the left hand-side is constrained to be a vector tangent to the manifold \(\cal M\) and \(\nabla f(x)\) is not constrained in that way (refer to the right panel in Figure~\ref{fig:simplex_diagram}).
		
		\item \emph{General \({\cal N}\).} {\color{black} From \eqref{eq:partthree},}  \(\eta:= \nabla \phi(z)-\zeta\in\cal N\) {\color{black} if $z = \chi(\zeta)$}.
		Then, by {\color{black} using \eqref{eq:partfour}}, \(\chi^\prime(\zeta) = \chi^\prime(\nabla \phi(z))\) and from \eqref{eq:odeaux}
		\begin{equation}\label{eq:odeaux3}
			\dot{ x} =-\chi^\prime(\nabla \phi(x))\nabla f(x).
		\end{equation}
	\end{itemize}
	{\color{black} This ODE has appeared before in the literature in \cite{KBB16}}. In the particular case with \({\cal N} = \{0\}\), this reduces to \eqref{eq:odeaux2}, because \(\chi^\prime(\nabla \phi(x))\) is the inverse
	of \((\nabla \phi(x))^\prime\), as it is seen by differentiating \(\chi(\nabla \phi(x)) = x\).
	
	\begin{example}\label{ex:simplex} In the case of the simplex, straightforward differentiation of the expression for \(\chi\) shows that \eqref{eq:odeaux3} reads:
		\begin{equation}\label{eq:edosimplex}
			\dot{ x}= D(x) \Big( -\nabla f(x)+ \langle -\nabla f(x) ,x \rangle {\bf 1}\Big),
		\end{equation}
		where \(D(x)\) is the diagonal matrix with entries \(x_i\).  {\color{black} This  ODE  plays an important role in evolutionary game theory \cite{W97} and viability theory \cite{JP90} as it can be used to model replicator dynamics.}
		
		For \(x\) in the relative interior, $D(x)$ is \emph{the inverse Jacobian} of the map \(\nabla \phi\), whose components are \(1+\log x_i\) (i.e.\ the inverse of the Hessian of \(\phi\)). The  right-hand side of \eqref{eq:edosimplex} is of the form \(D(x)v\), where the
		vector  \(v = \nabla f(x)+\langle  \nabla f(x) ,x \rangle {\bf 1}\) is a linear combination of  \(\nabla f(x)\) and \(\bf 1\), with a coefficient \(\langle  \nabla f(x) ,x \rangle\) that ensures that \(v\) is tangent  at \(\nabla \phi(x)\) to the manifold \(\cal M\). Multiplying \(v\) by the inverse Jacobian results in a vector \(D(x)v\) that is tangent to the relative interior of the simplex, the inverse image of \(\cal M\) by \(\nabla \phi\). Analytically this corresponds to the fact that, in \eqref{eq:edosimplex},
		\((d/dt) \sum_j x_j = 0\) so that this differential equation preserves the constraint \(\sum_j x_j=1\). Furthermore, due to the  presence of \(D(x)\), the \(i\)-th component of the right hand-side of \eqref{eq:edosimplex} vanishes if \(x_i=0\). Therefore the points on the hyperplanes \(x_i=0\) in \(\bbE\) are equilibria of the differential equation, so that, as \(t\) varies, the \(x_i(t)\) will remain positive if \(x(0)\) is in the relative interior.
	\end{example}

	Similarly, when Assumption \ref{ass:two} holds, it is possible to rewrite the discretization \eqref{eq:descentdual} purely in terms of primal variables. For the case \({\cal N} =\{0\}\), we have
	\begin{equation}\label{eq:threenablas}
		\nabla \phi(x_{k+1})=\nabla \phi(x_k)-h\nabla f(x_k),
	\end{equation}
	that coincides with the Euler discretization of \eqref{eq:odeaux2}. Once \(\nabla \phi(x_{k+1})\) has been computed by this formula, \(x_{k+1 }\) is retrieved as \(\chi(\nabla \phi(x_{k+1}))\).

	For general, \(\mathcal N\) we note that
	\[
	x_{k+1} = \chi\big(\zeta_k -h \nabla f(x_k)\big);
	\]
	{\color{black}From \eqref{eq:partthree}, \(\eta = \nabla \phi(x_k)-\zeta_k\in{\cal N}\), and then, by \eqref{eq:partfour},} we have the following well-known formulation (see \cite[Remark 9.6]{B17}) of the mirror descent algorithm:
	\begin{equation}\label{eq:mirrordescent}
		x_{k+1} = \chi\big(\nabla \phi(x_k)-h\nabla f(x_k)\big).
	\end{equation}
	As it may have been expected, this is a consistent discretization of the differential equation \eqref{eq:odeaux3}, as it may be seen by Taylor expansion of the right hand-side. When \({\cal N} =\{0\}\), the application of \(\nabla \phi\) to \eqref{eq:mirrordescent}  yields \eqref{eq:threenablas}.
	
	The algorithm~\eqref{eq:mirrordescent} may be formulated from an optimization point of view, without using ODEs as stepping stones. Such a formulation is based on the
	the (primal) Bregman divergence \(D_\phi\) associated with \(\phi\):
	\[D_\phi(x,z) = \phi(x)-\phi(z) -\langle \nabla \phi(z), x-z \rangle.
	\]
	For example in the case of the simplex, $D_\phi$ is the \emph{Kullback-Leiber divergence}.
	Since \(\phi\) is finite in \(\cal X\) and differentiable over \({\rm ri}({\cal X})\), \(D_\phi(x,z)\) is defined at least for \(x\in \cal X\) and \(z\in {\rm ri}({\cal X})\). It is well known (see \cite[Remark 9.6]{B17}) that \eqref{eq:mirrordescent} is equivalent to
	\[
	x_{k+1}=\argmin_{x\in\mathcal{X}}  \left \{f(x_{k})+\inner{\nabla f(x_{k}),x-x_{k}}+\frac{1}{h}D_{\phi}(x,x_{k}) \right \}    .
	\]
	which is the direct non Euclidean counterpart of the gradient descent formula \eqref{eq:maxmin}.
	
	\section{Non-Euclidean accelerated ODEs and numerical schemes}\label{sec:non-Euclidean}
	
	Mirror descent may only achieve a \(\mathcal{O}(1/k)\) rate of convergence \citep{B17}, something to be expected from the fact that in the Euclidean setting it reduces to gradient descent. We now consider ODEs and algorithms that may provide rates \(\mathcal{O}(1/t^2)\) or \(\mathcal{O}(1/k^2)\)  in non Euclidean scenarios.
	
	\subsection{A Primal/Dual ODE and a discretization} \label{subsec:pr_du_ODE}
	
	In order to construct optimization algorithms,  \cite{KBB15} considered the system
	\begin{subequations}\label{eq:Kr_ode}
		\begin{eqnarray}\label{eq:odez}
			\dot{\zeta}(t) &=& -\frac{t}{r} \nabla f(x(t)),\\
			\label{eq:odex}
			\dot{x}(t) &=& \frac{r}{t} \big(\chi(\zeta(t))-x(t)\big),
		\end{eqnarray}
	\end{subequations}
	where \(r>0\) is a parameter. The variable \(x\) takes values in \(\mathbb{E}\) and \(\zeta\) takes values in \(\mathbb{E}^\star\); the paper \citep{KBB15} proves that if the initial data \((\zeta_0,x_0) \in \mathbb{E}^\star\times {\cal X }\) satisfy\footnote{The requirement \(\chi(\zeta_0)=x_0\) is imposed in view of the singularity of \eqref{eq:odez} at \(t=0\).} \(\chi(\zeta_0)=x_0\), then the system has a unique continuously differentiable solution \(\big(\zeta(t),x(t)\big)\) for \(0\leq t<\infty\). Note that \eqref{eq:Kr_ode} is a natural generalization of \eqref{eq:Polyak_first} as in fact coincides with it in the Euclidean case $\chi(\zeta)=\zeta$.

	If, \(r\geq 2\), \(x^\star\) is a minimizer,  \(\chi(\zeta^\star)= x^\star\), and Assumption \ref{as:cond_mir1} is satisfied, then
	\begin{equation}\label{eq:V}
		V\big(x,\zeta,t\big) = \frac{t^2}{r^2} \big(f(x)-f(x^\star)\big)+ D_{\psi^\star}\big(\zeta,\zeta^\star\big)\geq 0
	\end{equation}
	is a Lyapunov function for the system, i.e.\ \((d/dt)V(x(t),\zeta(t),t)\leq 0\) along solutions of  \eqref{eq:Kr_ode}. This immediately implies the following decay estimate of \(f(x(t))\) towards the optimal value \(f(x^\star)\)
	\begin{equation*}
		f(x(t))-f(x^\star) \leq \frac{r^2}{t^2} V\big(x(0),\zeta(0),0\big)= \frac{r^2}{t^2}
		D_{\psi^\star}\big(\zeta(0),\zeta^\star\big), \qquad t>0.
	\end{equation*}Note for future reference that \(V\) includes the dual variable \(\zeta\) through \(D_{\psi^\star}\) as in \eqref{eq:dstar}.

	As in the situation   we discussed in the Euclidean case,  discretizations \(\big(\zeta_k,x_k\big)\mapsto \big(\zeta_{k+1},x_{k+1}\big)\) of \eqref{eq:Kr_ode}, where \(\big(\zeta_k,x_k\big)\) approximate \(\big(\zeta(k\delta),x(k\delta)\big)\) (\(\delta>0\) is the time-step), may offer the potential of providing optimization algorithms for which \(f(x_k)-f(x^\star)\) decays like \(1/k^2\), i.e.\ algorithms that show acceleration.
	An  algorithm with acceleration was suggested in \cite{KBB15}. It uses a learning rate \(h>0\),  parameters \(r>0\) and \(\gamma>0\) and  a \emph{regularization} function \(R\) such that for \(x\), \(y\in \cal X\),
	\begin{equation}\label{eq:hypothesesonR}
	\frac{\ell_R}{2} \| x-y\|^2 \leq R(x,y)\leq \frac{L_R}{2}\| x-y\|^2.
	\end{equation}
	We will refer to it as accelerated mirror descent with regularization (AMDR) and it is given in Algorithm~\ref{alg:amd_kr}.
	\begin{algorithm}
		\caption{Accelerated Mirror Descent with Regularization (AMDR) \citep{KBB15}}\label{alg:amd_kr}
		\begin{algorithmic}
			\Require $ N\in \mathbb{N}, h>0, r\geq 0, \gamma>0$ and regularizer $R$
			
			\State 0.  Initialize: $x_0\in \mathcal{X}, \zeta_0\in \mathbb{E}^\star \big(\chi(\zeta_0) = x_0\big)$
			\For{$k=0,\ldots, N-1$}
			\State 1. $y_k \gets x_k +\frac{r}{r+k} \big( \chi(\zeta_k)-x_k)$
			\State 2. $\zeta_{k+1} \gets  \zeta_k -\frac{kh}{r}\nabla f(y_k)$  
			\State 3. $x_{k+1} \gets {\rm arg min}_{x\in \cal X}\big(\gamma h \langle \nabla f(y_k),x \rangle+R(x,y_k ) \big)$
			\EndFor\\
			\Return $x_N$
		\end{algorithmic}
	\end{algorithm}
	If \(\delta = \sqrt{h}\), then the algorithm may be seen as a numerical method to integrate the system \eqref{eq:Kr_ode}, with \(\zeta_k\) and \(x_k\) approximations to \(\zeta(k\delta)\) and \(x(k\delta)\) respectively. This is easily proved after taking into consideration that \(x_{k+1}\) and \(y_k\) differ by an \(\mathcal{O}(\delta^2)\) amount \citep{KBB15}.
	
	The discretization in AMDR  was constructed so as to \emph{inherit} the Lyapunov function \eqref{eq:V}. This is a nontrivial task, because, typically, numerical integrators, even if very accurate, fail to reproduce the large \(t\) properties of the system being integrated; see the discussion in \cite{SSKZ21}. For this algorithm, it is proved in \cite[Lemma 2]{KBB15} that, if \(\gamma \geq L_RL_\chi\) and \(h\leq \ell_R/(2L_f \gamma)\), then
	\[
	V(x_{k+1},\zeta_{k+1}, (k+1)\delta) - V(x_k,\zeta_k, k\delta)\leq \frac{(2k+1-kr)}{r^2} \big(f(x_{k+1})-f(x^\star)\big).
	\]
	For \(r\geq 3\), \(k\geq 1\), the right hand-side is \(\leq 0\) and thus the bound establishes an \(\mathcal{O}(1/k^2)\) decay of \(f(x_k)-f(x^\star)\) (acceleration).

	\subsection{An alternative primal/dual discretization}
	\label{subsec:alg_new}
	The need for the regularisation function $R$ in AMDR could be problematic. For example, in the case of the simplex, one possible choice of $R$ is  an $\epsilon$-smooth entropy function \citep{KBB15,KKB15}. In that case, there is an efficient algorithm to implement Step 3 of AMDR, but unfortunately the value of $\gamma$ to be used depends on $\epsilon$ and  the learning rate $h$ can become prohibitively small {\color{black} see also discussion in Remark \ref{rem:lr}}. Furthermore, beyond the simplex setting, it might not be obvious how to set $R$ and implement Step 3 {\color{black} (see Section~\ref{sec:numerics})}.  Motivated by this, we propose Algorithm \ref{alg:amd_us} (AMD) that is a natural generalization of Nesterov's method in the non-Euclidean setting and makes no use of a regularization step. A learning rate \(h>0\) and a sequence \(\{\gamma_k\}_{k=0}^\infty\) with \(\gamma_0=1\), \(\gamma_k \geq 1\),  \(k= 1, 2,\dots\), are required.
	\begin{algorithm}
		\caption{Accelerated Mirror Descent (AMD)}\label{alg:amd_us}
		\begin{algorithmic}
			\Require $ N\in \mathbb{N}, \{\gamma_k\}_{k=0}^{N-1}, h>0$
			\State 0. Initialize $x_0\in \mathcal{X}$, $\zeta_0\in \mathbb{E}^\star$,
			\For{$k=0,\ldots, N-1$}
			\State 1. $y_k \gets x_k +\frac{1}{\gamma_k} \big( \chi(\zeta_k)-x_k\big)$
			\State 2. $\zeta_{k+1} \gets \zeta_k -\gamma_k h\nabla f(y_k)$  
			\State 3. $x_{k+1} \gets y_k +\frac{1}{\gamma_k}\big(\chi(\zeta_{k+1})-\chi(\zeta_k)\big)$
			\EndFor\\
			\Return $x_N$
		\end{algorithmic}
	\end{algorithm}

	Note that Step 3 is equivalent to
	\begin{equation}\label{eq:algxbis}
		x_{k+1} = x_k +\frac{1}{\gamma_k}\big(\chi(\zeta_{k+1})-x_k\big).
	\end{equation}
	In Step 1, $y_k$ is a convex combination of \(x_k\) and \(\chi(\zeta_k)\) and in \eqref{eq:algxbis} $x_{k+1}$ is a convex combination of \(x_k\) and \(\chi(\zeta_{k+1})\). By induction, all the \(y_k\) are in \(\cal X\) (so that \(\nabla f(y_k)\) makes sense) and all the \(x_k\) are also in \(\cal X\).
	
	In the Euclidean case this algorithm reduces to
	\eqref{eq:nest_euclidean}, i.e.\ to the Nesterov algorithm implemented as a one-step recursion with the help of the variable \(z\). The standard implementation \eqref{eq:euclideo1}--\eqref{eq:euclideo2} of Nesterov's algorithm   cannot be directly applied to the non Euclidean scenario since \eqref{eq:euclideo1} mixes primal and dual variables. In  AMD such a mixing is avoided; in Step 2, the gradients are accumulated in a dual variable (as in mirror descent \eqref{eq:descentdual}) and the primal mirror images of the dual variable are used to perform the convex combinations in Steps 1 and 3.

\begin{remark}
		\color{black}{	In AMDR there is freedom to select an appropriate regularizer $R$.  The recommendation in \cite{KBB15} is to use a Bregman divergence $R=D_g$ for a suitable function $g$. In that case Step 3 of AMDR is a mirror descent step and can be written in the same form as \eqref{eq:mirrordescent}, i.e.
			$
			x_{k+1} = \chi_g(\nabla g(y_k)-\gamma h \nabla f(y_k))
			$
			where $\chi_g = \nabla \psi_g^\star$ with $\psi_g= g+\delta_\mathcal{X}$.
			We can rewrite this expression in terms of $\zeta_{k+1}$ and $\zeta_k$ as
			\begin{equation*}
				x_{k+1} = \chi_g\left(\nabla g(y_k)+\frac{\gamma r}{k}(\zeta_{k+1}-\zeta_k) \right).
			\end{equation*}
			This rewriting may make it easier to compare Step 3 in AMDR and AMD: while AMD performs a linear combination on the primal space, for AMDR the linear combination is performed on the dual space. This means that in general, even if we allow $g=\phi$ where $\phi$ is taken from  Assumption~\ref{ass:two}, the two algorithms will not agree except for the unconstrained Euclidean case.
			
			Note that in most applications (for instance, the simplex and the hypercube) the choice $g=\phi$ is not covered by the analysis of \cite{KBB15}. Indeed, the restriction that \eqref{eq:hypothesesonR} holds, i.e.\ that $g$ must be strongly convex and smooth with respect to the norm $\lVert \cdot \rVert$, means that choices such as negative entropy or negative bit entropy are not permissible. On the other hand, AMDR may benefit from the advantage of  choosing $\chi$ based purely on the function $f$, so as to work with a normed space $\bbE$ for which $L_f$ is small, while simultaneously choosing $R$ to deal with the set $\mathcal{X}$.}
	\end{remark}

	After defining \(\delta = \sqrt{h}\) and \(\tilde t_k = r\delta\gamma_k\), Steps 1 and 2 in Algorithm \ref{alg:amd_us} and \eqref{eq:algxbis} imply
	\begin{eqnarray*}
		\frac{1}{\delta} (y_{k}-x_k) & = & \frac{r}{\tilde t_k}(\chi(\zeta_k)-x_k),\\
		\frac{1}{\delta}(\zeta_{k+1}-\zeta_k) &=& - \frac{\tilde t_k}{r} \nabla f(y_k),\\
		\frac{1}{\delta} (x_{k+1}-x_k) & = & \frac{r}{\tilde t_k}(\chi(z_{k+1})-x_k).
	\end{eqnarray*}
	If we now assume that
	\begin{equation}\label{eq:asympgamma}
		\gamma_k = \frac{k}{r}+o(k), \qquad k\rightarrow \infty,
	\end{equation}
	then \(\tilde t_k \rightarrow k\delta\)  as \(k\rightarrow \infty\) {\color{black} and} \(\delta \rightarrow 0\) with \(k\delta\) constant.
	We therefore have the following result:
	\begin{theorem} Suppose that Assumption~\ref{as:cond_mir1} and \eqref{eq:asympgamma}  hold, then AMD provides a consistent one-step numerical integrator  \((\zeta_k,x_k)\mapsto (\zeta_{k+1},x_{k+1})\) for the system of ODEs \eqref{eq:Kr_ode}.
	\end{theorem}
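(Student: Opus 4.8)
The plan is to regard the reformulation of Steps~1--3 displayed just above the theorem as a (non-autonomous) one-step map $(\zeta_k,x_k)\mapsto(\zeta_{k+1},x_{k+1})$ and to verify the standard consistency requirement: feeding the exact solution of \eqref{eq:Kr_ode} into the map at a time $t>0$ should reproduce the exact solution at $t+\delta$ up to an $o(\delta)$ error. Concretely, I would fix $t_0>0$ and a time $t\ge t_0$, work in the regime $\delta=\sqrt h\to0$, $k\to\infty$ with $k\delta\to t$, set $x_k=x(t)$ and $\zeta_k=\zeta(t)$ (the exact solution is continuously differentiable by the existence result quoted from \cite{KBB15}), and establish $x_{k+1}=x(t+\delta)+o(\delta)$ and $\zeta_{k+1}=\zeta(t+\delta)+o(\delta)$; by definition of consistency for a one-step integrator this is exactly what is needed.

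The first step is bookkeeping on the coefficients. From \eqref{eq:asympgamma} one gets $\gamma_k^{-1}=r/k+o(1/k)=r\delta/t+o(\delta)$, hence $\tilde t_k=r\delta\gamma_k\to t$, so $r/\tilde t_k=\gamma_k^{-1}/\delta=(r/t)+o(1)$ and $\tilde t_k/r=\delta\gamma_k=(t/r)+o(1)$. Using that $\chi(\zeta(\cdot))$, $x(\cdot)$ and $\nabla f(x(\cdot))$ are continuous, hence bounded, on $[t_0,t]$, the first of the three displayed relations gives $y_k-x_k=\gamma_k^{-1}\big(\chi(\zeta_k)-x_k\big)=O(\delta)$, so $y_k=x(t)+O(\delta)$; then the $L_\chi$-smoothness of $\chi$ in Assumption~\ref{as:cond_mir1} together with the second relation gives $\zeta_{k+1}-\zeta_k=O(\delta)$ and hence $\chi(\zeta_{k+1})-\chi(\zeta_k)=O(\delta)$, while continuity of $\nabla f$ near $\mathcal X$ gives $\nabla f(y_k)=\nabla f(x(t))+o(1)$. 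A by-product worth recording, parallel to the $\mathcal O(\delta^2)$ remark made for AMDR, is $x_{k+1}-y_k=\gamma_k^{-1}\big(\chi(\zeta_{k+1})-\chi(\zeta_k)\big)=O(\delta^2)$.

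The second step substitutes these estimates back. The first relation becomes $y_k=x(t)+\delta\,\tfrac{r}{t}\big(\chi(\zeta(t))-x(t)\big)+o(\delta)=x(t)+\delta\,\dot x(t)+o(\delta)=x(t+\delta)+o(\delta)$, using \eqref{eq:odex} and the first-order expansion $x(t+\delta)=x(t)+\delta\dot x(t)+o(\delta)$ (valid because $\dot x$ is continuous); combined with $x_{k+1}-y_k=O(\delta^2)$ this gives $x_{k+1}=x(t+\delta)+o(\delta)$. The second relation becomes $\zeta_{k+1}=\zeta(t)-\delta\,\tfrac{t}{r}\,\nabla f(x(t))+o(\delta)=\zeta(t)+\delta\,\dot\zeta(t)+o(\delta)=\zeta(t+\delta)+o(\delta)$, using \eqref{eq:odez}. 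The third relation is then automatically compatible with these once $\chi(\zeta_{k+1})-\chi(\zeta_k)=O(\delta)$ is in hand, so it needs no separate treatment, and the verification is complete.

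The only point I expect to need genuine care is the singularity of \eqref{eq:Kr_ode} at $t=0$: consistency cannot be asserted uniformly down to the origin, since the finite-difference coefficient $r/\tilde t_k\sim r/(k\delta)$ blows up for small $k$, so the whole argument must be run on intervals $[t_0,t]$ with $t_0>0$ and must lean on the boundedness of the exact solution and of $\nabla f$ along it there. Everything else is first-order Taylor expansion together with the two regularity facts already available — the $L_\chi$-smoothness of $\chi$ (Assumption~\ref{as:cond_mir1}) and continuity of $\nabla f$ — and in particular no extra hypotheses such as boundedness of $\mathcal X$ or a global Lipschitz gradient for $f$ are required.
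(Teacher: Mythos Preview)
Your proposal is correct and follows essentially the same approach as the paper: rewrite the AMD steps as the three divided-difference relations displayed immediately before the theorem, use \eqref{eq:asympgamma} to conclude \(\tilde t_k\to t\) in the limit \(\delta\to 0\), \(k\delta\to t\), and observe that the right-hand sides reproduce those of \eqref{eq:Kr_ode}. The paper treats this as a one-line observation preceding the theorem statement, whereas you supply the full \(o(\delta)\) bookkeeping (the \(y_k=x(t)+O(\delta)\) estimate, the use of \(L_\chi\)-smoothness for \(\chi(\zeta_{k+1})-\chi(\zeta_k)=O(\delta)\), and the \(x_{k+1}-y_k=O(\delta^2)\) observation) and explicitly flag the \(t=0\) singularity; but the underlying argument is the same.
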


	The following result implies that, for suitable choices of the learning rate \(h\) and the constants \(\gamma_k\),  AMD is indeed an accelerated optimization method.

	\begin{theorem}\label{th:decay} Suppose that Assumption~\ref{as:cond_mir1}  holds and that the coefficients \(\gamma_k\) satisfy
		\begin{equation}\label{eq:upperbound}
			\gamma_k^2-\gamma_{k-1}^2-\gamma_k\leq 0, \qquad k=1,2,\dots
		\end{equation}
		Assume that \(f\) is \(L_f\)-smooth, i.e.\ for each \(x,y\) in the domain of \(f\):
		\begin{equation}\label{eq:smooth}
			\|\nabla f(x)-\nabla f(y)\|_\star \leq L_f\|x-y\|
		\end{equation}
		and that
		\begin{equation}\label{eq:h}
			h\leq \frac{1}{L_f L_\chi}.
		\end{equation}
		Let \(x^\star\) be a minimizer of \eqref{eq:min} and \(\zeta^\star\in\mathbb{E}^\star\) satisfy \(\chi(\zeta^\star)\), and define for \(k=0,1,\dots\)
		\begin{equation}
			\label{eq:lyap}V_k(x_k,\zeta_k) = (\gamma_k^2-\gamma_k)h \big(f(x_k)-f(x^\star)\big)+ D_{\psi^\star}(\zeta_k,\zeta^\star).
		\end{equation}
		Then, along trajectories generated by AMD
		\[
		V_{k+1}(x_{k+1},\zeta_{k+1})\leq V_k(x_k,\zeta_k), \qquad k = 0,1,\dots
		\]
		and therefore
		\[
		(\gamma_k^2-\gamma_k)h\big(f(x_k)-f(x^\star)\big) \leq D_{\psi^\star}(\zeta_0,\zeta^\star), \qquad k=0,1,\dots
		\]
	\end{theorem}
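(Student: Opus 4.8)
The plan is to prove the one-step inequality $V_{k+1}(x_{k+1},\zeta_{k+1})\le V_k(x_k,\zeta_k)$ for every $k\ge 0$ and then telescope: since $\gamma_0=1$ the function-value part of $V_0$ vanishes, so $V_k\le V_0=D_{\psi^\star}(\zeta_0,\zeta^\star)$, and dropping the nonnegative summand $D_{\psi^\star}(\zeta_k,\zeta^\star)$ (convexity of $\psi^\star$ from Assumption~\ref{as:cond_mir1} is used here) gives the stated estimate. To control the change in the Bregman term I would invoke the three-point identity $D_{\psi^\star}(\zeta_{k+1},\zeta^\star)-D_{\psi^\star}(\zeta_k,\zeta^\star)=D_{\psi^\star}(\zeta_{k+1},\zeta_k)+\langle\zeta_{k+1}-\zeta_k,\nabla\psi^\star(\zeta_k)-\nabla\psi^\star(\zeta^\star)\rangle$, recalling $\nabla\psi^\star(\zeta_k)=\chi(\zeta_k)$ and $\nabla\psi^\star(\zeta^\star)=x^\star$; Step~2 of Algorithm~\ref{alg:amd_us} then rewrites the inner product as $-\gamma_k h\langle\nabla f(y_k),\chi(\zeta_k)-x^\star\rangle$.

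Next, Step~1 in the form $\chi(\zeta_k)=\gamma_k y_k-(\gamma_k-1)x_k$ gives $\chi(\zeta_k)-x^\star=\gamma_k(y_k-x^\star)-(\gamma_k-1)(x_k-x^\star)$; expanding $x_k-x^\star$ through $y_k$ and collecting like terms, the cross term collapses to $-\gamma_k h\langle\nabla f(y_k),y_k-x^\star\rangle+\gamma_k(\gamma_k-1)h\langle\nabla f(y_k),x_k-y_k\rangle$, and two uses of convexity of $f$ — the first against the negative coefficient $-\gamma_k h$, the second against the nonnegative coefficient $\gamma_k(\gamma_k-1)h$, which is where $\gamma_k\ge 1$ enters — bound this by $-\gamma_k h(f(y_k)-f(x^\star))+\gamma_k(\gamma_k-1)h(f(x_k)-f(y_k))$. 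Adding the potential increment $(\gamma_{k+1}^2-\gamma_{k+1})h(f(x_{k+1})-f(x^\star))-(\gamma_k^2-\gamma_k)h(f(x_k)-f(x^\star))$, the $f(x_k)-f(x^\star)$ contributions cancel identically, leaving $V_{k+1}-V_k\le(\gamma_{k+1}^2-\gamma_{k+1})h(f(x_{k+1})-f(x^\star))-\gamma_k^2 h(f(y_k)-f(x^\star))+D_{\psi^\star}(\zeta_{k+1},\zeta_k)$. Since $x_{k+1}\in\mathcal{X}$ forces $f(x_{k+1})-f(x^\star)\ge 0$ and since \eqref{eq:upperbound} at index $k+1$ reads $\gamma_{k+1}^2-\gamma_{k+1}\le\gamma_k^2$, the first term is at most $\gamma_k^2 h(f(x_{k+1})-f(x^\star))$, and everything reduces to showing $\gamma_k^2 h(f(x_{k+1})-f(y_k))+D_{\psi^\star}(\zeta_{k+1},\zeta_k)\le 0$.

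This last inequality is where I expect the real difficulty, and where the non-Euclidean setting forces a nonstandard argument: unlike in Euclidean Nesterov, the step $x_{k+1}-y_k=\gamma_k^{-1}(\chi(\zeta_{k+1})-\chi(\zeta_k))$ is not a plain gradient step, so $\langle\nabla f(y_k),x_{k+1}-y_k\rangle$ is not visibly negative and must be routed through the dual variables. The $L_f$-smoothness descent inequality \eqref{eq:smooth} yields $\gamma_k^2 h(f(x_{k+1})-f(y_k))\le\gamma_k h\langle\nabla f(y_k),\chi(\zeta_{k+1})-\chi(\zeta_k)\rangle+\tfrac{L_f h}{2}\norm{\chi(\zeta_{k+1})-\chi(\zeta_k)}^2$; using $\gamma_k h\nabla f(y_k)=\zeta_k-\zeta_{k+1}$ together with $D_{\psi^\star}(\zeta_{k+1},\zeta_k)+D_{\psi^\star}(\zeta_k,\zeta_{k+1})=\langle\zeta_{k+1}-\zeta_k,\chi(\zeta_{k+1})-\chi(\zeta_k)\rangle$, the first term equals $-D_{\psi^\star}(\zeta_{k+1},\zeta_k)-D_{\psi^\star}(\zeta_k,\zeta_{k+1})$, so the quantity in question is at most $-D_{\psi^\star}(\zeta_k,\zeta_{k+1})+\tfrac{L_f h}{2}\norm{\chi(\zeta_{k+1})-\chi(\zeta_k)}^2$. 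Finally, $\psi^\star$ is convex and $L_\chi$-smooth because its gradient $\chi$ is $L_\chi$-Lipschitz (Assumption~\ref{as:cond_mir1}), whence $D_{\psi^\star}(\zeta_k,\zeta_{k+1})\ge\tfrac{1}{2L_\chi}\norm{\chi(\zeta_k)-\chi(\zeta_{k+1})}^2$, and with $h\le 1/(L_fL_\chi)$ from \eqref{eq:h} what remains is $\tfrac12\big(L_f h-\tfrac{1}{L_\chi}\big)\norm{\chi(\zeta_{k+1})-\chi(\zeta_k)}^2\le 0$, closing the induction. In short, everything past the bookkeeping comes down to driving the non-Euclidean descent by the matched pair of smoothness facts for $\psi^\star$ — the identity $D_g(a,b)+D_g(b,a)=\langle\nabla g(a)-\nabla g(b),a-b\rangle$ to convert the cross term, and the quadratic lower bound $D_g(a,b)\ge(2L)^{-1}\norm{\nabla g(a)-\nabla g(b)}^2$ — in place of the familiar quadratic upper bound on the Bregman divergence.
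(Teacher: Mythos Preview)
Your proof is correct and uses the same ingredients as the paper's: convexity of \(f\) applied twice, the \(L_f\)-smoothness descent inequality, a three-point identity for \(D_{\psi^\star}\), the lower bound \(D_{\psi^\star}(\cdot,\cdot)\ge \frac{1}{2L_\chi}\|\chi(\cdot)-\chi(\cdot)\|^2\), and the conditions \eqref{eq:upperbound} (at index \(k+1\)) and \eqref{eq:h}. The only organizational difference is that the paper invokes the three-point lemma in the form producing \(-D_{\psi^\star}(\zeta_k,\zeta_{k+1})+\langle\zeta_{k+1}-\zeta_k,\chi(\zeta_{k+1})-x^\star\rangle\) and pairs the cross term with the convex-combination identity \(x_{k+1}=(1-1/\gamma_k)x_k+(1/\gamma_k)\chi(\zeta_{k+1})\), whereas you use the variant producing \(+D_{\psi^\star}(\zeta_{k+1},\zeta_k)+\langle\zeta_{k+1}-\zeta_k,\chi(\zeta_k)-x^\star\rangle\) and pair it with Step~1; the two are equivalent via the sum identity \(D_g(a,b)+D_g(b,a)=\langle\nabla g(a)-\nabla g(b),a-b\rangle\) you later invoke, so this is a cosmetic reshuffling rather than a different argument.
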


\begin{proof}
	We first deal with the part of the Lyapunov function that involves the variable \(x\).
	By the convexity and smoothness of \(f\),
	\[
	f(x_{k+1}) \leq f(y_k) +\langle\nabla f(y_k), x_{k+1}-y\rangle + \frac{L_f}{2} \|x_{k+1}-y_k\|^2,
	\]
	and, from the definition of \(x_{k+1}\) in Step 3 of Algorithm~\ref{alg:amd_us}, after shortening slightly the notation,
	\[
	f(x_{k+1}) \leq f(y_k) +\langle\nabla_k , x_{k+1}-y_k\rangle + \frac{L_f}{2\gamma_k^2} \|\chi_{k+1}-\chi_k\|^2.
	\]
	Here $\nabla_k=\nabla f(y_k)$ and $\chi_k=\chi(\zeta_k)$.
	We now use \eqref{eq:algxbis} to get
	\begin{eqnarray*}
		f(x_{k+1}) &\leq& f(y_k) +\langle\nabla_k , \left(1-\frac{1}{\gamma_k}\right)x_k+\frac{1}{\gamma_k}\chi_{k+1}-y_k\rangle + \frac{L_f}{2\gamma_k^2} \|\chi_{k+1}-\chi_k\|^2\\
		&=&  \left(1-\frac{1}{\gamma_k}\right)\Big(f(y_k)+\langle \nabla_k, x_k-y_k\rangle\Big) \\
		&&\qquad+ \frac{1}{\gamma_k} \Big(f(y_k)+\langle \nabla_k, \chi_{k+1}-y_k\rangle\Big)
		+ \frac{L_f}{2\gamma_k^2} \|\chi_{k+1}-\chi_k\|^2,\\
		&=& \left(1-\frac{1}{\gamma_k}\right)\Big(f(y_k)+\langle \nabla_k, x_k-y_k\rangle\Big) + \frac{1}{\gamma_k} \Big(f(y_k)+\langle \nabla_k, x^\star-y_k\rangle\Big)
		\\
		&&\qquad + \frac{1}{\gamma_k} \langle \nabla_k, \chi_{k+1}-x^\star\rangle+\frac{L_f}{2\gamma_k^2} \|\chi_{k+1}-\chi_k\|^2.
	\end{eqnarray*}
	Invoking again the convexity of \(f\)
	\begin{eqnarray*}
		f(x_{k+1})&\leq& \left(1-\frac{1}{\gamma_k}\right)f(x_k)+ \frac{1}{\gamma_k} f(x^\star)+ \frac{1}{\gamma_k} \langle \nabla_k, \chi_{k+1}-x^\star\rangle\\
		&&\qquad +\frac{L_f}{2\gamma_k^2} \|\chi_{k+1}-\chi_k\|^2,
	\end{eqnarray*}
	and therefore
	\begin{eqnarray*}
		f(x_{k+1})-f(x^\star) &\leq& \left(1-\frac{1}{\gamma_k}\right) \big(f(x_k)-f(x^\star)\big)+ \frac{1}{\gamma_k} \langle \nabla_k, \chi_{k+1}-x^\star\rangle\\
		&& \qquad+\frac{L_f}{2\gamma_k^2} \|\chi_{k+1}-\chi_k\|^2.
	\end{eqnarray*}
	We now multiply across by \(\gamma_k^2h\) and take into account the formula in Step 2 of the algorithm,
	\begin{eqnarray*}
		\gamma_k^2h\big(f(x_{k+1})-f(x^\star)\big) &\leq& (\gamma_k^2-\gamma_k)h \big(f(x_k)-f(x^\star)\big)\\
		&& \qquad-\langle \zeta_{k+1}-\zeta_k, \chi_{k+1}-x^\star\rangle+\frac{L_fh}{2} \|\chi_{k+1}-\chi_k\|^2.
	\end{eqnarray*}
	The bounds \eqref{eq:upperbound} and \eqref{eq:h} then yield
	\begin{eqnarray}
		\label{eq:potentialdecay}
		&&(\gamma_{k+1}^2-\gamma_{k+1})h \big(f(x_{k+1})-f(x^\star)\big) \:\leq \:(\gamma_k^2-\gamma_k)h \big(f(x_k)-f(x^\star)\big)\\&&\nonumber
		\qquad\qquad\qquad\qquad\qquad\qquad-\langle \zeta_{k+1}-\zeta_k, \chi_{k+1}-x^\star\rangle +\frac{1}{2L_\chi} \|\chi_{k+1}-\chi_k\|^2.
	\end{eqnarray}
	
	We next address the part of the Lyapunov function involving the variable \(\zeta\).
	By using the three-points lemma  for \(D_{\psi^\star}\) as in \cite[Lemma 5]{KBB15} and recalling that \(\chi(\zeta^\star) =x^\star\),
	\[
	D_{\psi^\star}(\zeta_{k+1},\zeta^\star)=D_{\psi^\star}(\zeta_k,\zeta^\star)-D_{\psi^\star}(\zeta_k,\zeta_{k+1})+\langle \zeta_{k+1}-\zeta_k,\chi_{k+1}-x^\star \rangle,
	\]
	and the smoothness bound in \cite[Lemma 5]{KBB15} yields
	\[
	D_{\psi^\star}(\zeta_{k+1},\zeta^\star)\leq D_{\psi^\star}(\zeta_k,\zeta^\star) -\frac{1}{2L_\chi}\|\chi_{k+1}-\chi_k\|^2 +\langle \zeta_{k+1}-\zeta_k,\chi_{k+1}-x^\star \rangle.
	\]
	It is now sufficient to add the last bound to \eqref{eq:potentialdecay}.
\end{proof}
	
	{\color{black} \begin{remark} \label{rem:lr} The learning rate $h$ allowed in the case of AMD so as to ensure that the corresponding Lyapunov function is indeed decaying is larger than the corresponding one in the case of AMDR. In particular, for the case of the simplex on $\mathbb{R}^{n}$, where the regularization used in AMDR is the $\epsilon$-smooth entropy function ones has that for AMD the upper bound is $h_{up}=\frac{1}{L_{f}}$, since $L_{\chi}=1$,  while in the case of AMDR one has $h_{up}=\frac{\epsilon}{2(1+n\epsilon)L_{f}}$. One thus would expect that if these bounds are sharp that AMDR would be behaving  slower than AMD (see also the numerical experiments in Section \ref{subsec:quad_obj}).  \end{remark}}
	
	Under the consistency condition \eqref{eq:asympgamma}, the right hand-side of \eqref{eq:lyap} converges, in the limit \(\delta\rightarrow 0\), \(k\delta \rightarrow t\), to the Lyapunov function \eqref{eq:V} of the differential equations. The choice
	\begin{equation}\label{eq2:choicer}
		\gamma_k = \frac{k+r}{r}, \qquad k = 0, 1,\dots
	\end{equation}
	fulfills the consistency requirement \eqref{eq:asympgamma} and, if \(r\geq 2\), also the condition \eqref{eq:upperbound}.  Recall that the same condition on \(r\) is required for \eqref{eq:V}  to be a Lyapunov function for the differential equations unlike the case of AMDR. When the coefficients \(\gamma_k\) are chosen as in  \eqref{eq2:choicer}, the theorem shows a decay \(f(x_k)-f(x^\star)\) like \(1/((\gamma_k^2-\gamma_h)h)\sim r^2/(k^2h)\) as \(k\rightarrow \infty\).
	
	The best decay of \(f(x_k)-f(x^\star)\) that may be proved with the theorem occurs when the \(\gamma_k\) are chosen as large as possible subject to \eqref{eq:upperbound}, i.e.\ when the inequality in \eqref{eq:upperbound} becomes an equality. In this case, we have the well-known recurrence \eqref{eq:fancygamma}.
	These coefficients are slightly larger than \eqref{eq2:choicer} with \(r=2\) and, accordingly, guarantee a slightly better convergence.  One may prove that with this recurrence, as \(k\rightarrow \infty\),  \(\gamma_k = k/2+(1/4)\log k+o(\log k)\), to be compared with the estimate
	\(\gamma_k = k/2+\mathcal{O}(1)\) valid for \eqref{eq2:choicer} with \(r=2\).

	\subsubsection{Connection with Additive Runge Kutta methods}
	{\color{black}
	As discussed in the previous section, AMD  is a discretization of the  ODE \eqref{eq:Kr_ode}. However, this discretization does not correspond to any of the more standard classes of ODE solvers, such as linear multistep or Runge-Kutta (RK) methods. {\color{black} Note in particular that $\nabla f$ is not evaluated at the approximations $\zeta_{k}$ produced by the algorithm as it would typically be the case with classical methods.} As we will show similarly to the case of Nesterov algorithms for strongly convex objective functions in Euclidean space  \cite{DSZ24}, AMD is an example of the class of Additive Runge-Kutta (ARK) algorithms, a generalization of the RK integrators firstly introduced in  \citep{Cooper80,Cooper83}.
	
	ARK algorithms integrate systems of differential equations $(d/dt)\xi=g(\xi)$ in cases where $g(\xi)$ can be decomposed as a sum $g(z)=\sum_{i=1}^{N}g^{[i]}(\xi)$. In the plain RK case, the numerical solution is advance over one time step $\xi_{k} \mapsto \xi_{k+1}$ by evaluating $g(\xi)$ at a sequence of so-called stage vectors $\Xi_{k,1} \dots, \Xi_{k,s}$  and then setting $z_{k+1}=z_{k}+\sum_{i=1}^{s}b_{i}g(\Xi_{k,i})$, where $b_{i}$ are suitable weights. In the case of explicit algorithms the stages are computed successively, $i=1,\cdots,s$ as $\Xi_{k,i}=\xi_{k}+h\sum_{j=1}^{i-1} a_{i,j}g(\Xi_{k,j})$, with suitable coefficents $a_{i,j}$. ARK algorithms are entirely similar, but evaluate the individual pieces $g^{[i]}(\xi)$ instead  of $g(\xi)$.
	}

	 We  now set \( \xi = (\zeta,x)\) and write
	\eqref{eq:Kr_ode} by additively decomposing the right hand-side as
	\[
	\dot{\xi} = g^{[1]}(\xi,t)+g^{[2]}(\xi,t)+g^{[3]}(\xi,t),
	\]
	with
	\[
	g^{[1]}(\xi,t) = \left[ \begin{matrix}0\\-\frac{r}{t} x\end{matrix}\right], \quad
	g^{[2]}(\xi,t) = \left[ \begin{matrix}0\\\frac{r}{t} \chi(\zeta)\end{matrix}\right], \quad
	g^{[3]}(\xi,t) = \left[ \begin{matrix}-\frac{t}{r} \nabla f(x)\\0\end{matrix}\right].
	\]
	Then the step \((\zeta_k,x_k)\mapsto (\zeta_{k+1},x_{k+1})\) may be written in a Runge-Kutta fashion as
	\[
	\xi_{k+1} = \xi_{k} +\delta g^{[1]}(\Xi_{k,1},\tilde t_k)+\delta g^{[2]}(\Xi_{k,3},\tilde t_k)+\delta g^{[3]}(\Xi_{k,3},\tilde t_k)
	\]
	with the stage vectors defined by
	\begin{eqnarray*}
		\Xi_{k,1} &=& \xi_k,\\
		\Xi_{k,2} &=& \xi_k+\delta  g^{[1]}(\Xi_{k,1},\tilde t_k)+\delta g^{[2]}(\Xi_{k,1},\tilde t_k),\\
		\Xi_{k,3} &=& \xi_k+\delta  g^{[1]}(\Xi_{k,1},\tilde t_k)+\delta g^{[2]}(\Xi_{k,1},\tilde t_k)+\delta g^{[3]}(\Xi_{k,2},\tilde t_k).
	\end{eqnarray*}
	Note that  \(\Xi_{k,1} = (\zeta_k,x_k)\), \(\Xi_{k,2} = (\zeta_k,y_k)\), \(\Xi_{k,3} = (\zeta_{k+1},y_k)\). Thus the successive computations of  \(\Xi_{k,2}\), \(\Xi_{k,3}\), and \(\xi_{k+1}\) in the Additive Runge-Kutta scheme represent the computations of \(y_k\), \( \zeta_{k+1}\), \(x_{k+1}\).
	
	\subsection{Convergence  when \(x^\star\) is not in the image of the mirror map}\label{subsec:alternative}

	The Lyapunov functions \eqref{eq:V} \eqref{eq:lyap}, used  for establishing convergence for  the ODE \eqref{eq:Kr_ode} and its discretizations AMDR and AMD, contain a term \(D_{\psi^\star}(\zeta,\zeta^\star)\), where \(\chi(\zeta^\star)=x^\star\). Therefore they cannot be used to establish convergence when the minimizer \(x^\star\) is not in the image of the mirror map. In the case of the simplex, this implies that one cannot treat minimizers \(x^\star\) having one or more zero components. Similarly, for the hypercube, minimizers having some of their components equal to \(0\) or \(1\) cannot be dealt with. In this subsection we remove this limitation by using Lyapunov functions that, as distinct from those considered above or in \cite{KBB15}, are formulated purely in terms of \emph{primal} variables. Accordingly,  {\color{black} similarly to \cite{Merti}}, we will operate with \emph{Bregman divergences} defined in \(\bbE\) rather than in \(\bbE^\star\) and this will require that Assumption~\ref{ass:two} holds.

	\subsubsection{The differential system}
	
	Using the primal Bregman divergence \(D_\phi\), for the system of differential equations \eqref{eq:Kr_ode}, in lieu of the Lyapunov function \eqref{eq:V}, we may alternatively consider
	\begin{equation}\label{eq:Vhat}
		\widehat V\big(x,\zeta,t\big) = \frac{t^2}{r^2} \big(f(x)-f(x^\star)\big)+ D_\phi\big(x^\star,\chi(\zeta)),
	\end{equation}
	where we note that \(D_\phi\big(x^\star,\chi(\zeta))\)  is well defined because \(\chi\) takes values in \({\rm ri}({\cal X})\). Now
	the existence of \(\zeta^\star\) with \(\chi(\zeta^\star) = x^\star\) is not required. If such a \(\zeta^\star\) exists, then the numerical values of \eqref{eq:V} and \eqref{eq:Vhat} coincide, according to well-known properties of the Bregman divergence. The following theorem shows that, for \(r\geq 2\), \(\widehat V\) is indeed a Lyapunov function and therefore \(f(x(t))-f(x^\star)\) decays like \(1/t^2\).
	\begin{theorem}\label{th:continuoustime} Suppose that Assumptions~\ref{as:cond_mir1} and ~\ref{ass:two} hold. If \(r\geq 2\), then along solutions of \eqref{eq:Kr_ode}, \((d/dt)\widehat{V} \leq 0\).
	\end{theorem}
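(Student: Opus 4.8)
The plan is to differentiate $\widehat V\big(x(t),\zeta(t),t\big)$ along a solution of \eqref{eq:Kr_ode} and bound the result by convexity of $f$, the decisive simplification of the Bregman term coming from Lemma~\ref{lemma}. Write $z(t)=\chi(\zeta(t))$; by Assumption~\ref{ass:two} we have $z(t)\in{\rm ri}({\cal X})\subseteq{\cal A}$ for all $t$, so $D_\phi\big(x^\star,z(t)\big)$ is well defined, and $z(\cdot)$ is $C^1$ since $\zeta(\cdot)$ is $C^1$ and $\chi=\nabla\psi^\star$ is $C^1$. Because $x^\star\in{\cal X}\subseteq{\cal A}$ and $z(t)\in{\cal A}$, Part~3 of Lemma~\ref{lemma} gives $\langle\nabla\phi(z)-\zeta,\,x^\star-z\rangle=0$, hence
\[
D_\phi(x^\star,z)=\phi(x^\star)-\phi(z)-\langle\zeta,\,x^\star-z\rangle .
\]

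First I would differentiate this rewritten expression. The term $\phi(x^\star)$ is constant, $\tfrac{d}{dt}\phi(z)=\langle\nabla\phi(z),\dot z\rangle$, and $\tfrac{d}{dt}\langle\zeta,x^\star-z\rangle=\langle\dot\zeta,x^\star-z\rangle-\langle\zeta,\dot z\rangle$. Since $z(t)$ remains in the affine set ${\cal A}$, its velocity satisfies $\dot z\in{\cal V}$, while $\nabla\phi(z)-\zeta\in{\cal N}$ by Part~3 of Lemma~\ref{lemma}, and ${\cal N}\perp{\cal V}$; therefore $\langle\nabla\phi(z)-\zeta,\dot z\rangle=0$ and the two $\langle\cdot,\dot z\rangle$ contributions cancel, leaving
\[
\frac{d}{dt}D_\phi(x^\star,z)=-\langle\dot\zeta,\,x^\star-z\rangle=\frac{t}{r}\langle\nabla f(x),\,x^\star-z\rangle,
\]
using $\dot\zeta=-(t/r)\nabla f(x)$ from \eqref{eq:odez}. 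For the first term of $\widehat V$, using $\dot x=(r/t)\big(z-x\big)$ from \eqref{eq:odex},
\[
\frac{d}{dt}\left[\frac{t^2}{r^2}\big(f(x)-f(x^\star)\big)\right]=\frac{2t}{r^2}\big(f(x)-f(x^\star)\big)+\frac{t}{r}\langle\nabla f(x),\,z-x\rangle .
\]
Adding the two pieces, the terms $\tfrac{t}{r}\langle\nabla f(x),z\rangle$ cancel and
\[
\frac{d}{dt}\widehat V=\frac{2t}{r^2}\big(f(x)-f(x^\star)\big)+\frac{t}{r}\langle\nabla f(x),\,x^\star-x\rangle .
\]

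To conclude I would invoke convexity of $f$, which gives $\langle\nabla f(x),x^\star-x\rangle\le f(x^\star)-f(x)$, so that
\[
\frac{d}{dt}\widehat V\le \frac{t}{r}\Big(\frac{2}{r}-1\Big)\big(f(x)-f(x^\star)\big).
\]
Since $x(t)\in{\cal X}$ — which follows from the integral representation $x(t)=t^{-r}\int_0^t r s^{r-1} z(s)\,ds$ exhibiting $x(t)$ as an average of points of ${\rm ri}({\cal X})$, cf.\ \cite{KBB15} — one has $f(x(t))\ge f(x^\star)$, and $t\ge 0$; hence for $r\ge 2$ the right-hand side is $\le 0$, which is the assertion.

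The computation is short; the only real subtlety, and the place I expect the work to concentrate, is the bookkeeping with ${\cal N}$ and ${\cal V}$ that makes the Bregman term collapse — namely checking $\dot z\in{\cal V}$ and using ${\cal N}\perp{\cal V}$ — together with confirming that $x(t)$ stays in ${\cal X}$ so that $f(x(t))-f(x^\star)\ge 0$. One can alternatively differentiate $D_\phi(x^\star,z)$ directly without the rewriting, noting $\tfrac{d}{dt}\nabla\phi(z)=\dot\zeta+\dot\eta$ with $\dot\eta\in{\cal N}$ and $x^\star-z\in{\cal V}$; this is equivalent but needs $\phi$ to be $C^2$ on ${\rm ri}({\cal X})$ rather than merely $C^1$.
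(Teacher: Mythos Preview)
Your proof is correct and takes essentially the same approach as the paper: differentiate $\widehat V$ along solutions, use Part~3 of Lemma~\ref{lemma} to reduce the derivative of the Bregman term to $-\langle\dot\zeta,\,x^\star-z\rangle$, substitute the ODEs, and conclude via convexity of $f$ and $r\ge 2$. The only cosmetic difference is that you rewrite $D_\phi(x^\star,z)=\phi(x^\star)-\phi(z)-\langle\zeta,x^\star-z\rangle$ \emph{before} differentiating, whereas the paper differentiates directly and then replaces $\langle\tfrac{d}{dt}\nabla\phi(\chi(\zeta)),\,x^\star-\chi(\zeta)\rangle$ by $\langle\dot\zeta,\,x^\star-\chi(\zeta)\rangle$ --- precisely the ``alternative'' you describe at the end.
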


	\begin{proof}
	By differentiating we have,
	\begin{eqnarray*}
		\frac{d}{dt} \widehat V &=& \frac{2t}{r^2} \big(f(x(t))-f(x^\star)\big)-\frac{t^2}{r^2}\langle\nabla f(x(t)), \dot{ x}(t)\rangle\\&&
		\qquad+
		\langle\frac{d}{dt}\nabla \phi\big(\chi(\zeta(t))\big),x^\star-\chi(\zeta(t)) \rangle,
	\end{eqnarray*}
	and Lemma~\ref{lemma}, Part 3, implies
	\[
	\langle\frac{d}{dt}\nabla \phi\big(\chi(\zeta(t))\big),x^\star-\chi(\zeta(t)) \rangle = \langle\dot{\zeta}(t),x^\star-\chi(\zeta(t)) \rangle.
	\]
	These two equalities and the differential equations \eqref{eq:odez}--\eqref{eq:odex} may be combined to yield:
	\begin{eqnarray*}
		\frac{d}{dt}\widehat V &=&\left(\frac{2}{r}-1\right)\frac{t}{r}\big(f(x(t))-f(x^\star)\big)\\
		&&\qquad -\frac{t}{r} \ \Big(f(x^\star) -f(x(t))-\langle\nabla f(x(t)) ,x^\star-x(t)\rangle\Big).
	\end{eqnarray*}
	Both terms in the right hand-side are \(\leq 0\), the first because \(r\geq 2\) and the second because \(f\) is convex.
\end{proof}
	
	\subsubsection{Algorithm~\ref{alg:amd_kr} (AMDR)}
	
	For  AMDR, Lemma 2 in \cite{KBB15} may be replaced by the following new result which implies that for \(r\geq 3\) we shall have acceleration even if \(x^\star\) is not in the image of \(\chi\).
	\begin{theorem}\label{th:decaykrichene} Suppose that Assumptions~\ref{as:cond_mir1} and ~\ref{ass:two} hold.
		If \(f\) is \(L_f\)-smooth,\(\gamma \geq L_RL_\chi\) and \(h\leq \ell_R/(2L_f \gamma)\), then for AMDR
		\[
		\widehat V(x_{k+1},\zeta_{k+1}, (k+1)\delta) -\widehat V(x_k,\zeta_k, k\delta)\leq \frac{(2k+1-kr)h}{r^2} \big(f(x_{k+1})-f(x^\star)\big).
		\]
	\end{theorem}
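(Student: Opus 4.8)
The plan is to follow the proof of Lemma~2 of \cite{KBB15}, replacing the \emph{dual} discrepancy $D_{\psi^\star}(\zeta_k,\zeta^\star)$ by the \emph{primal} one $D_\phi(x^\star,\chi(\zeta_k))$. The only place the original argument invokes a $\zeta^\star$ with $\chi(\zeta^\star)=x^\star$ is an algebraic identity, and the new content is the observation that this identity survives without $\zeta^\star$. Write $\delta^2=h$ and abbreviate $\widehat V_k:=\widehat V(x_k,\zeta_k,k\delta)=\tfrac{k^2h}{r^2}\bigl(f(x_k)-f(x^\star)\bigr)+D_\phi\bigl(x^\star,\chi(\zeta_k)\bigr)$. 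First I would establish
\[
D_\phi\bigl(x^\star,\chi(\zeta_{k+1})\bigr)-D_\phi\bigl(x^\star,\chi(\zeta_k)\bigr)=D_{\psi^\star}(\zeta_{k+1},\zeta_k)+\frac{kh}{r}\bigl\langle\nabla f(y_k),x^\star-\chi(\zeta_k)\bigr\rangle .
\]
By Part~3 of Lemma~\ref{lemma}, $\nabla\phi(\chi(\zeta))-\zeta\in\mathcal N$ and $x^\star,\chi(\zeta)\in\mathcal A$, so $D_\phi(x^\star,\chi(\zeta))=\phi(x^\star)-\phi(\chi(\zeta))-\langle\zeta,x^\star-\chi(\zeta)\rangle$; combining this with the Fenchel equality $\psi^\star(\zeta)=\langle\zeta,\chi(\zeta)\rangle-\phi(\chi(\zeta))$ (which likewise yields $D_\phi(\chi(\zeta_k),\chi(\zeta_{k+1}))=D_{\psi^\star}(\zeta_{k+1},\zeta_k)$) and Step~2, $\zeta_{k+1}-\zeta_k=-\tfrac{kh}{r}\nabla f(y_k)$, gives the identity. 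No $\zeta^\star$ appears, so every term is well defined under Assumptions~\ref{as:cond_mir1}--\ref{ass:two} alone --- this is precisely what promotes Lemma~2 of \cite{KBB15} to the present statement.

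Next, use Step~1, which gives $\chi(\zeta_k)-y_k=\tfrac kr(y_k-x_k)$, hence $x^\star-\chi(\zeta_k)=(x^\star-y_k)-\tfrac kr(y_k-x_k)$, together with convexity of $f$ to bound $\langle\nabla f(y_k),x^\star-y_k\rangle\le f(x^\star)-f(y_k)$ and $\langle\nabla f(y_k),y_k-x_k\rangle\ge f(y_k)-f(x_k)$. Substituting into $\widehat V_{k+1}-\widehat V_k$, the $f(x_k)$ terms cancel and
\[
\widehat V_{k+1}-\widehat V_k\le\frac{(k+1)^2h}{r^2}\bigl(f(x_{k+1})-f(x^\star)\bigr)-\frac{kh(r+k)}{r^2}\bigl(f(y_k)-f(x^\star)\bigr)+D_{\psi^\star}(\zeta_{k+1},\zeta_k).
\]
Now the descent lemma for the $L_f$-smooth $f$ at $y_k\mapsto x_{k+1}$, together with optimality in Step~3 tested against $x=y_k$ (admissible: $y_k$ is a convex combination with positive weight on $\chi(\zeta_k)\in{\rm ri}(\mathcal X)$, hence $y_k\in{\rm ri}(\mathcal X)\subseteq\mathcal X$), the bound $R(x_{k+1},y_k)\ge\tfrac{\ell_R}{2}\|x_{k+1}-y_k\|^2$, and $h\le\ell_R/(2L_f\gamma)$, give $f(y_k)-f(x_{k+1})\ge\rho\|x_{k+1}-y_k\|^2$ with $\rho:=\tfrac12\bigl(\tfrac{\ell_R}{\gamma h}-L_f\bigr)>0$. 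Using $(k+1)^2\xi-k(r+k)\eta=(2k+1-kr)\xi-k(r+k)(\eta-\xi)$ with $\xi=f(x_{k+1})-f(x^\star)$, $\eta=f(y_k)-f(x^\star)$ (so $\eta-\xi=f(y_k)-f(x_{k+1})\ge\rho\|x_{k+1}-y_k\|^2$) one arrives at
\[
\widehat V_{k+1}-\widehat V_k\le\frac{(2k+1-kr)h}{r^2}\bigl(f(x_{k+1})-f(x^\star)\bigr)+\Bigl(D_{\psi^\star}(\zeta_{k+1},\zeta_k)-\frac{kh(r+k)}{r^2}\rho\|x_{k+1}-y_k\|^2\Bigr),
\]
so it remains to show the last bracket is $\le 0$.

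Showing that bracket is nonpositive is the main obstacle, and the place where $\gamma\ge L_RL_\chi$ is used. On one side, $\nabla\psi^\star=\chi$ is $L_\chi$-Lipschitz, so $\psi^\star$ is $L_\chi$-smooth and $D_{\psi^\star}(\zeta_{k+1},\zeta_k)\le\tfrac{L_\chi}{2}\|\zeta_{k+1}-\zeta_k\|_\star^2=\tfrac{L_\chi k^2h^2}{2r^2}\|\nabla f(y_k)\|_\star^2$, where by Part~4 of Lemma~\ref{lemma} only the component of $\nabla f(y_k)$ along $\mathcal V$ actually contributes (this is what lets the simplex case $\mathcal N\neq\{0\}$ go through). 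On the other side, testing the optimality in Step~3 against a competitor $x=y_k-tu\in\mathcal X$ ($u\in\mathcal V$ a unit vector aligned against $\nabla f(y_k)$, $t>0$ small, available because $y_k\in{\rm ri}(\mathcal X)$), using $R(\cdot,y_k)\le\tfrac{L_R}{2}\|\cdot-y_k\|^2$ and optimising in $t$, produces a lower bound for $\|x_{k+1}-y_k\|$ proportional to $\gamma h\|\nabla f(y_k)\|_\star$. Inserting this, bounding $k\le r+k$, and using $\tfrac{\ell_R}{\gamma h}-L_f\ge\tfrac{\ell_R}{2\gamma h}$ (from $h\le\ell_R/(2L_f\gamma)$), one checks that $\gamma\ge L_RL_\chi$ is exactly what yields $D_{\psi^\star}(\zeta_{k+1},\zeta_k)\le\tfrac{kh(r+k)}{r^2}\rho\|x_{k+1}-y_k\|^2$; the precise constant bookkeeping is as in \cite{KBB15}. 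Finally, two degenerate cases: if $\nabla f(y_k)\in\mathcal N$ then $y_k$ minimises $f$ over $\mathcal X$, Step~3 returns $x_{k+1}=y_k$, Step~2 shifts $\zeta$ within $\mathcal N$ so that $\chi(\zeta_{k+1})=\chi(\zeta_k)$, and $\widehat V_{k+1}-\widehat V_k=-\tfrac{k^2h}{r^2}(f(x_k)-f(x^\star))\le 0$; and for $k=0$ one has $\zeta_1=\zeta_0$, hence $\chi(\zeta_1)=\chi(\zeta_0)$ and the claimed bound holds with equality.
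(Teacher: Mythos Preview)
There is a genuine gap. Your identity for the Bregman difference, the convexity estimates, and the descent bound $f(y_k)-f(x_{k+1})\ge\rho\|x_{k+1}-y_k\|^2$ are all correct, and they do lead to the displayed inequality with the residual bracket
\[
D_{\psi^\star}(\zeta_{k+1},\zeta_k)-\frac{kh(r+k)}{r^2}\,\rho\,\|x_{k+1}-y_k\|^2.
\]
The problem is your argument that this bracket is $\le 0$. It rests on a lower bound $\|x_{k+1}-y_k\|\gtrsim\gamma h\,\|\nabla f(y_k)\|_\star$ obtained by testing Step~3 at the competitor $y_k-tu$ and optimising in $t$; but the optimal $t^\star=\gamma h\,\|\nabla f(y_k)\|_\star/L_R$ is admissible only if $y_k-t^\star u\in\mathcal X$. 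When $y_k$ is close to $\partial\mathcal X$ in the direction $-u$ (which can happen, since $x_k$ may lie on $\partial\mathcal X$ after a constrained Step~3 and then $y_k$ is close to $x_k$ for large $k$), the largest feasible $t$ is only of order ${\rm dist}(y_k,\partial\mathcal X)$ and the lower bound collapses. Concretely, on $\mathcal X=[0,1]$ with $R(x,y)=\tfrac12(x-y)^2$, $y_k=\epsilon$ and $\nabla f(y_k)=1$ give $x_{k+1}=0$ and $\|x_{k+1}-y_k\|=\epsilon$, while $D_{\psi^\star}(\zeta_{k+1},\zeta_k)$ is of order $k^2h^2/r^2$; the bracket is then strictly positive for small $\epsilon$. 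So this decomposition cannot close.

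The paper (and KBB15) avoids the difficulty by using the \emph{other} form of the three-points identity,
\[
D_\phi(x^\star,\chi_{k+1})-D_\phi(x^\star,\chi_k)=-D_\phi(\chi_{k+1},\chi_k)+\langle\zeta_{k+1}-\zeta_k,\chi_{k+1}-x^\star\rangle,
\]
with Lemma~\ref{lemma}, Part~3, again supplying the replacement of $\nabla\phi(\chi(\zeta))$ by $\zeta$. Now the residual Bregman term carries a \emph{minus} sign and, via $D_\phi(\chi_{k+1},\chi_k)\ge\tfrac1{2L_\chi}\|\chi_{k+1}-\chi_k\|^2$, becomes a helpful negative quantity rather than one that must be absorbed. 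The inner product then involves $\chi_{k+1}$ rather than $\chi_k$, which matches exactly the term KBB15's Lemma~2 handles by testing Step~3 at the feasible competitor $x=\chi_{k+1}\in\mathcal X$ (always admissible, unlike your $y_k-t^\star u$). With this form the entire $x$-part of KBB15's argument applies verbatim, which is what the paper's proof says.
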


\begin{proof}The part of the Lyapunov function involving \(x\) is dealt with as in the proof of Lemma 2 in \cite{KBB15} and the difference in Bregman divergences is treated exactly as in the proof of Theorem~\ref{th:decaybis} to be given below.
\end{proof}
	
	\subsubsection{Algorithm~\ref{alg:amd_us} (AMD)}
	In the context of the AMD instead of using the discrete Lyapunov function \eqref{eq:lyap}, we will alternatively consider for \(k= 0,1,\dots\)
	\begin{equation}
		\label{eq:lyapbis}\widehat {V}_k(x_k,\zeta_k) = (\gamma_k^2-\gamma_k)h \big(f(x_k)-f(x^\star)\big)+ D_\phi\big(x^\star,\chi(\zeta_k)\big).
	\end{equation}
	By using
	\(\widehat V\), Theorem~\ref{th:decay}  may be strengthened as follows {\color{black}to show that the decay of \(f(x_k)-f(x^\star)\) takes place whether \(x^\star\) is in the image of \(\chi\) or otherwise.}
:
	
	\begin{theorem}\label{th:decaybis}
		Suppose that Assumptions~\ref{as:cond_mir1} and  \ref{ass:two} hold and that the coefficients \(\gamma_k\) satisfy \eqref{eq:upperbound}.
		Assume that \(f\) is \(L_f\)-smooth
		and that
		\[
		h\leq \frac{1}{L_f L_\chi}.
		\]

		Let \(x^\star\) be a minimizer of \eqref{eq:min}. Then, for $(x_{k+1},\zeta_{k+1})$ given by AMD
		\[
		\widehat V_{k+1}(x_{k+1},\zeta_{k+1})\leq \widehat V_k(x_k,\zeta_k), \qquad k = 0,1,\dots
		\]
		and therefore
		\[
		(\gamma_k^2-\gamma_k)h\big(f(x_k)-f(x^\star)\big) \leq D_\phi(x^\star,\chi(\zeta_0)), \qquad k=0,1,\dots
		\]
	\end{theorem}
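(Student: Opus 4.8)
The plan is to follow the template of the proof of Theorem~\ref{th:decay}, estimating the one-step change $\widehat V_{k+1}-\widehat V_k$ term by term, but to treat the Bregman part via a three-point identity for the \emph{primal} divergence $D_\phi$ together with Part~3 of Lemma~\ref{lemma}, rather than the dual estimate used for \eqref{eq:lyap}; it is precisely this substitution that removes the need for a dual point $\zeta^\star$ with $\chi(\zeta^\star)=x^\star$. I would first dispose of the first term of $\widehat V_k$: re-indexing \eqref{eq:upperbound} gives $\gamma_{k+1}^2-\gamma_{k+1}\le\gamma_k^2$, and since $x^\star$ is a minimizer $f(x_{k+1})-f(x^\star)\ge 0$, so the first term at step $k+1$ is at most $\gamma_k^2 h\big(f(x_{k+1})-f(x^\star)\big)$.

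For the Bregman part I would use that $\chi(\zeta_k),\chi(\zeta_{k+1})\in{\rm ri}(\mathcal X)$, where $\phi$ is differentiable, and the elementary identity
\[
D_\phi\big(x^\star,\chi(\zeta_{k+1})\big)-D_\phi\big(x^\star,\chi(\zeta_k)\big)=\big\langle \nabla\phi(\chi(\zeta_k))-\nabla\phi(\chi(\zeta_{k+1})),\,x^\star-\chi(\zeta_{k+1})\big\rangle-D_\phi\big(\chi(\zeta_{k+1}),\chi(\zeta_k)\big).
\]
Here Part~3 of Lemma~\ref{lemma} is the crux: $\nabla\phi(\chi(\zeta_k))-\zeta_k$ and $\nabla\phi(\chi(\zeta_{k+1}))-\zeta_{k+1}$ lie in $\mathcal N$, whereas $x^\star-\chi(\zeta_{k+1})\in\mathcal X-\mathcal X\subseteq\mathcal V\perp\mathcal N$, so the pairing is unaffected by replacing each $\nabla\phi(\chi(\zeta_j))$ by $\zeta_j$; Step~2 of AMD then turns the right-hand side into $\gamma_k h\langle\nabla f(y_k),\,x^\star-\chi(\zeta_{k+1})\rangle-D_\phi(\chi(\zeta_{k+1}),\chi(\zeta_k))$. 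I would then split $x^\star-\chi(\zeta_{k+1})=(x^\star-y_k)+(y_k-\chi(\zeta_k))+(\chi(\zeta_k)-\chi(\zeta_{k+1}))$, use Steps~1 and~3 of AMD (which give $y_k-\chi(\zeta_k)=(\gamma_k-1)(x_k-y_k)$ and $\chi(\zeta_{k+1})-\chi(\zeta_k)=\gamma_k(x_{k+1}-y_k)$), apply convexity of $f$ to the first two pieces (permissible since $\gamma_k\ge1$) and $L_f$-smoothness to the third, and note that the $f(y_k)$ contributions cancel.

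Adding the three estimates, the coefficients $(\gamma_k^2-\gamma_k)h$ in \eqref{eq:lyapbis} are designed so that \emph{all} the remaining $f$-values ($f(x_k)$, $f(x_{k+1})$, $f(x^\star)$) cancel, leaving
\[
\widehat V_{k+1}-\widehat V_k\le \tfrac{L_f}{2}\,\gamma_k^2 h\,\|x_{k+1}-y_k\|^2-D_\phi\big(\chi(\zeta_{k+1}),\chi(\zeta_k)\big)=\tfrac{L_f h}{2}\,\|\chi(\zeta_{k+1})-\chi(\zeta_k)\|^2-D_\phi\big(\chi(\zeta_{k+1}),\chi(\zeta_k)\big),
\]
using $\gamma_k(x_{k+1}-y_k)=\chi(\zeta_{k+1})-\chi(\zeta_k)$ once more. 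To finish I would invoke that $L_\chi$-Lipschitzness of $\chi=\nabla\psi^\star$ (Assumption~\ref{as:cond_mir1}) is, by conjugate duality together with $\psi^\star$ being the convex conjugate of $\psi=\phi+\delta_{\mathcal X}$ (Assumption~\ref{ass:two}), equivalent to $(1/L_\chi)$-strong convexity of $\psi$; since $\psi=\phi$ on $\mathcal X$ and $\chi(\zeta_k)\in{\rm ri}(\mathcal X)$, this gives $D_\phi(\chi(\zeta_{k+1}),\chi(\zeta_k))\ge\tfrac{1}{2L_\chi}\|\chi(\zeta_{k+1})-\chi(\zeta_k)\|^2$, so the bound above is $\le\big(\tfrac{L_f h}{2}-\tfrac{1}{2L_\chi}\big)\|\chi(\zeta_{k+1})-\chi(\zeta_k)\|^2\le0$ exactly when $h\le 1/(L_fL_\chi)$. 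Telescoping from $k=0$, with $\gamma_0=1$ so $\widehat V_0=D_\phi(x^\star,\chi(\zeta_0))$, and $D_\phi\ge0$, then yields the displayed estimate.

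The only real obstacle I anticipate is the middle step: justifying the swap $\nabla\phi(\chi(\zeta_j))\leftrightarrow\zeta_j$ inside the pairing — this is where Assumption~\ref{ass:two} and the orthogonality $\mathcal N\perp\mathcal V$ are indispensable — and recognizing that, unlike in the dual argument for Theorem~\ref{th:decay}, the primal three-point identity \emph{automatically} supplies the negative term $-D_\phi(\chi(\zeta_{k+1}),\chi(\zeta_k))$, which, through strong convexity of $\psi$, exactly absorbs the quadratic $L_f$-smoothness error under the stated step-size restriction. Everything else is the same coefficient bookkeeping as in the proof of Theorem~\ref{th:decay}.
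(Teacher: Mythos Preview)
Your proposal is correct and follows essentially the same approach as the paper: the primal three-point identity for \(D_\phi\), the swap \(\nabla\phi(\chi(\zeta_j))\leftrightarrow\zeta_j\) via Lemma~\ref{lemma} Part~3, and the \((1/L_\chi)\)-strong convexity of \(\psi=\phi+\delta_{\mathcal X}\) are exactly the ingredients the paper uses. The only difference is organizational---the paper first reproduces the \(f\)-part of the proof of Theorem~\ref{th:decay} verbatim up to \eqref{eq:potentialdecay} and then adds the primal Bregman estimate, whereas you fold the convexity/smoothness bounds for \(f\) into the pairing \(\langle\nabla f(y_k),x^\star-\chi(\zeta_{k+1})\rangle\) after the three-point identity; the algebra and the cancellations are the same.
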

	
\begin{proof}We reproduce the proof of Theorem~\ref{th:decay} until we reach  \eqref{eq:potentialdecay}. For the part involving Bregman divergences, the three-points lemma for \(D_\phi\) gives
	\begin{eqnarray*}
		D_\phi(x^\star,\chi_{k+1})&=&D_\phi(x^\star,\chi_k)-D_\phi(\chi_{k+1},\chi_k)\\&&\qquad\qquad+\langle \nabla\phi(\chi(\zeta_{k+1}))-\nabla\phi(\chi(\zeta_k)), \chi_{k+1}-x^\star\rangle
	\end{eqnarray*}
	and, as a consequence of Lemma~\ref{lemma}, Part 3, the differences \(\nabla\phi(\chi(\zeta_{k+1}))-\zeta_{k+1}\) and \(\nabla\phi(\chi(\zeta_k))-\zeta_k\) are in \(\cal N\)
	and we may alternatively write
	\[
	D_\phi(x^\star,\chi_{k+1})=D_\phi(x^\star,\chi_k)-D_\phi(\chi_{k+1},\chi_k)+\langle \zeta_{k+1}-\zeta_k, \chi_{k+1}-x^\star\rangle.
	\]
	From \cite[Lemma 9.4 (a)]{B17}
	\[
	D_\phi(x^\star,\chi_{k+1})\leq D_\phi(x^\star,\chi_k)-\frac{1}{2L_\chi}\|\chi_{k+1}-\chi_k\|^2+\langle \zeta_{k+1}-\zeta_k, \chi_{k+1}-x^\star\rangle,
	\]
	because \(\psi=\phi+\delta_{\cal X}\) is \((1/L_\chi)\)-strongly convex  \cite[Theorem 5.26(a)]{B17}.
	The proof concludes by adding the last bound to \eqref{eq:potentialdecay}.
\end{proof}

\section{Numerical experiments}\label{sec:numerics}
	
	We now illustrate the performance of  AMDR  and AMD.  {\color{black} A full comparison would be lengthy ---as it would require  investigating different choices of $\cal X$, $f$ regularizers, etc.--- and is  not within the scope of this paper.}
The standard mirror descent algorithm \eqref{eq:mirrordescent} will be used as a benchmark. {\color{black} For AMD we use \(\gamma_k\) given by \eqref{eq:fancygamma}.}

\subsection{Examples for the simplex}
	In this Subsection we consider, as in \cite{KBB15}, problems where \(\cal X\) is the simplex.
	Recall that in this case it is possible to run AMDR with an efficient regularizer, something that, {\color{black} as we shall discuss later,} may or may not be the situation for other instances of \(\cal X\).
	
	In the experiments that follow, we set \(r=3\) for AMDR and, as in \cite{KBB15},  use \(\gamma = 1\) and perform Step 3 by means of the efficient procedure in \cite[Algorithm 4]{KBB15} with \(\epsilon = 0.3\). It turns out that with this setting, the computational costs per step of AMDR  and AMD are virtually identical and also coincide with those of mirror descent.
	
	\subsubsection{Non strongly convex objective function}
	\begin{figure}
		\centering
		{\includegraphics[width=0.8\textwidth]{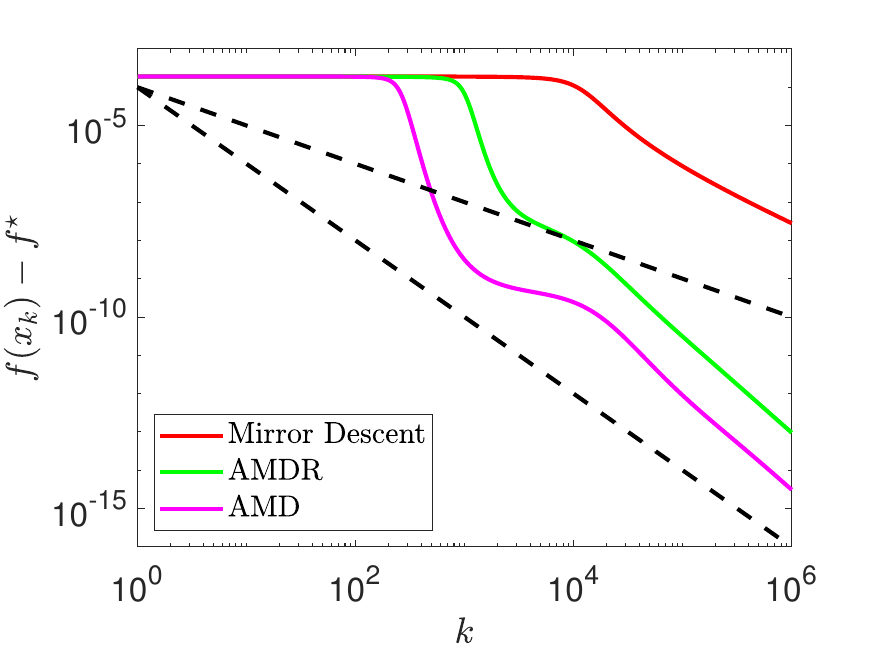}}
		\caption{Non strongly convex objective function, $f(x_k)-f(x^\star)$ vs. $k$. The dotted lines have  slopes corresponding to  decays $1/k$ and $1/k^2$.}
		\label{fig:firstexperiment}
	\end{figure}
	
	In order to check that  AMDR and AMD provide acceleration, we first consider an extremely  simple toy example with \(d=2\), \(f(x) = (1/p)\big((x_1-1/2)^{p}+(x_2-1/2)^{p}\big)\), \(p=10\). The initial condition is chosen as \([0.999, 0.001]^T\) and the three algorithms were run with different choices of the learning rate. Results for the representative value \(h=1\)  may be seen in Figure~\ref{fig:firstexperiment}. While for mirror descent, the decay is slightly better than \(1/k\), for AMDR and AMD the decay is slightly better than \(1/k^2\). Increasing the value of the parameter \(p\) results in rates that become closer to \(1/k\) for mirror descent and to \(1/k^2\) for the other two algorithms.
	\subsubsection{Quadratic objective function} \label{subsec:quad_obj}
	
	\begin{figure}
		\centering
		{\includegraphics[width=0.8\textwidth]{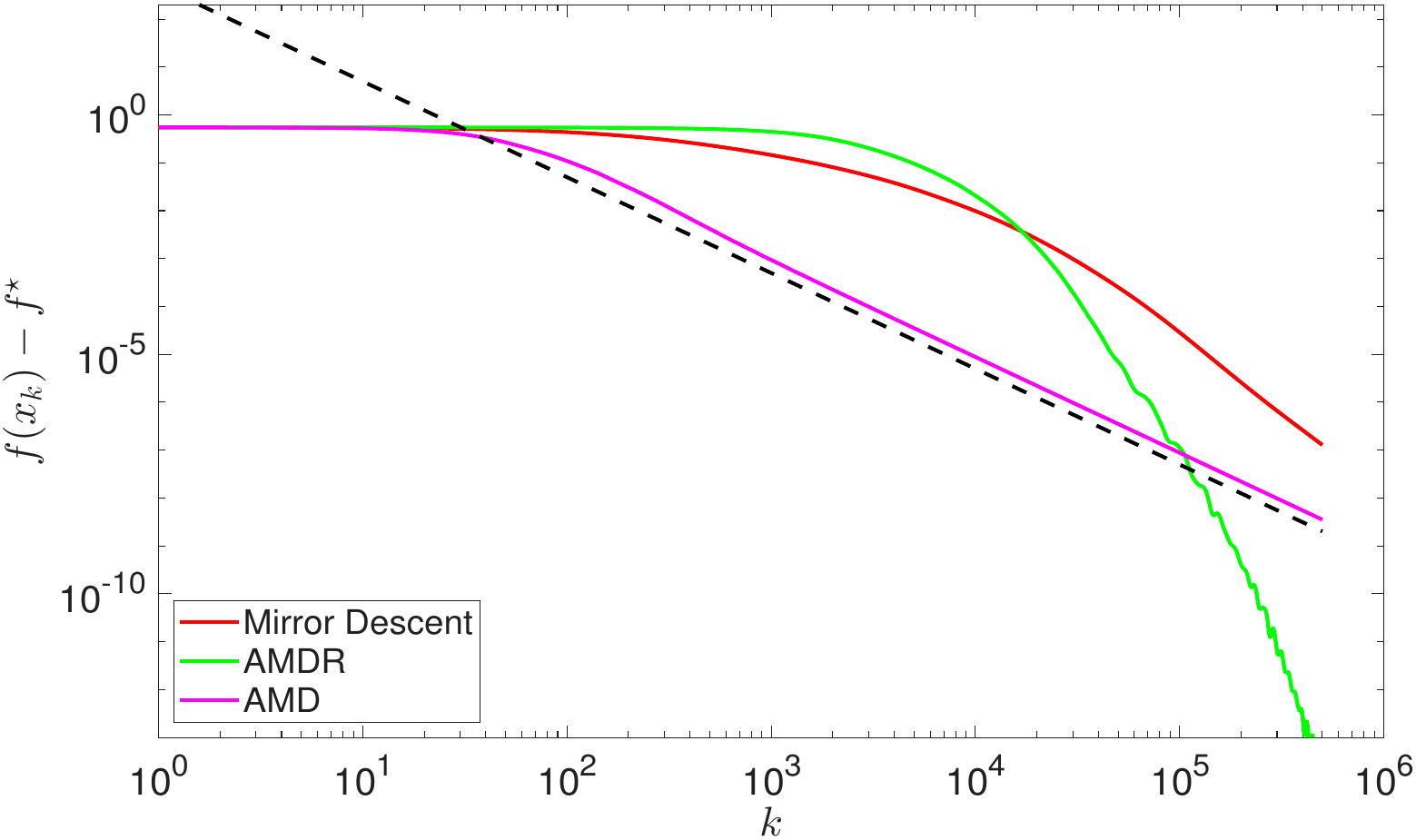}}
		\caption{\textcolor{red}{Quadratic objective function, $f(x_k)-f(x^\star)$ vs.\ $k$. The dotted line has a slope corresponding to a decay $1/k^2$.}}
		\label{fig:secondexperiment}
	\end{figure}
	Now, {\color{black} following \cite{KBB15},} the objective function is \(f(x) = (1/2) x^T B^T B x\), with \(B\) a \(d\times d\) matrix with entries  given by independent standard normal random variables. The initial $x_0$ is chosen randomly by generating a vector with independent, uniformly random components in \([0,1]\) and then rescaling to ensure that \(\sum_j x_j = 1\).  The smoothness constant \(L_f\) for the gradient $\nabla f(x) = B^TBx$ is the norm of $B^TB$ as an operator from \((\mathbb{R}^d, \ell^1)\) to \((\mathbb{R}^d, \ell^\infty)\), which is given by the maximum \(m(B^TB)\) of the absolute value of the entries. In Assumption \ref{as:cond_mir1}, \(L_\chi =1\) and, in view of condition \eqref{eq:h} in Theorem~\ref{th:decay}, we run  AMD with a learning rate \(h=1/m(B^TB)\); the same value is used for mirror descent. For AMDR we follow the prescription in \cite{KBB15} and set \[h = \sqrt{\epsilon/\big(2(1+d\epsilon)m(B^TB) \gamma\big)}.\] The experiment in Figure~\ref{fig:secondexperiment} has \(d=1000\) and \textcolor{red}{500,000} steps.
	The minimizer is not in the relative interior (in fact has 323 vanishing components) and the results in Section~\ref{subsec:alternative} are necessary to establish the convergence of AMDR and AMD. In addition, in the experiment, \textcolor{red}{\(m(B^TB) \approx 1.1\times 10^{3}\)} which leads to learning rates \textcolor{red}{\(h \approx 2.5\times 10^{-4}\)} for mirror descent and AMD and \textcolor{red}{\(h\approx 6.6\times 10^{-4}\)} for AMDR. The figure clearly bears out the \(1/k^2\) acceleration proved in Theorem~\ref{th:decaybis} for AMD. Mirror descent and AMDR lead initially to very little decay in \(f\) but they decay faster than \(1/k^2\) once they are near the minimizer.
	In this particular experiment they are both outperformed by AMD \textcolor{red}{for the first $10^5$ iterations}.
	
	\subsubsection{Quadratic objective function, larger learning rates}
	\begin{figure}
		\centering
		{\includegraphics[width=0.8\textwidth]{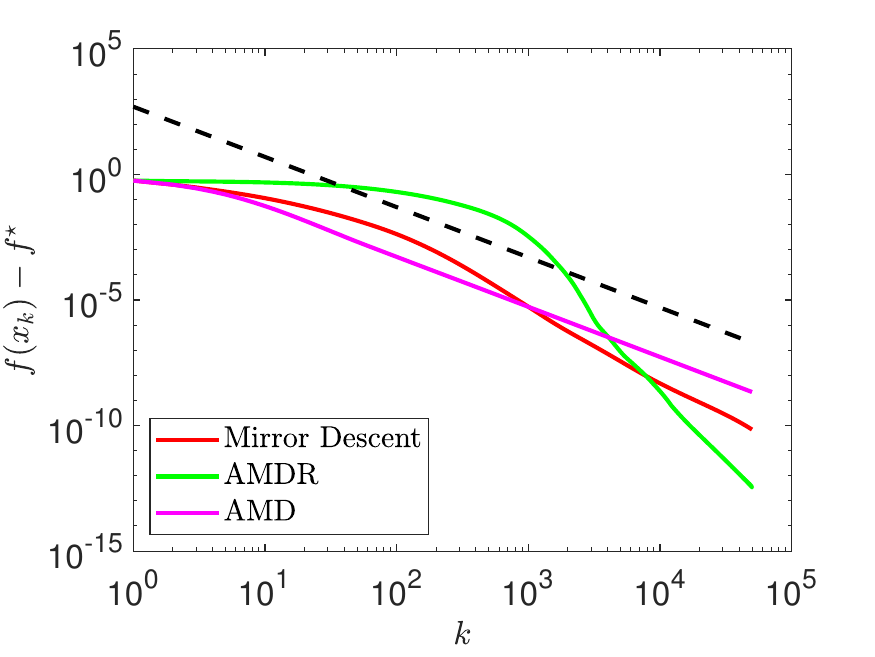}}
		\caption{Quadratic objective function, larger learning rates, $f(x_k)-f(x^\star)$ vs.\ $k$. The dotted line has the same equation as the reference line in Figure~\ref{fig:secondexperiment} so as to make it easy to compare both figures.}
		\label{fig:thirdexperiment}
	\end{figure}
	
	Numerical experimentation reveals that the recipes we have just used to determine the learning rates are too pessimistic; the three algorithms tested may operate with substantially larger values of \(h\), thus providing a larger decay in \(f\) for a given number of iterations. In fact, the values of \(h\) were based on the size of \(m(B^TB)\) (an operator norm for the matrix in \(\nabla f(x)=B^TBx\)). However, in \eqref{eq:edosimplex} and  in \eqref{eq:edosimplexbis} below we see that in the differential equations being approximated by the algorithms, \(\nabla f(x)\) is premultiplied by \(D(z)={\rm diag}(z)\) or \(D(x)={\rm diag}(x)\) respectively. Once the solution is close to the minimizer, those matrices are close to \(D(x^\star)\) and it is reasonable to think that  {\color{black} near the minimizer} the learning rates should really be determined by the size of the matrix \(D(x^\star)B^TB\)  rather than by the size of \(B^TB\). If \(D(x^\star)B^TB\) is much smaller than \(B^TB\) learning rates based on the size of \(B^TB\) may be expected to be unduly pessimistic.	
		
	These considerations may be related to the notion of \emph{relative smoothness} introduced in \cite{LFN18} (see also \cite{BBT17}), an alternative to the notion of (absolute) smoothness in \eqref{eq:smooth}. For the sake of brevity, we only present the concept of relative smoothness as it applies to the simplex.
	A real function \(g\), twice continuously differentiable, is said to be \(L_r\)-relatively smooth with respect to the negative entropy \(\phi\) in \eqref{eq:negent}, if for \(z\) in the relative interior of the simplex
	\begin{equation} \label{eq:rsmooth}
	\nabla^2 g(z) \preceq L_r \nabla^2 \phi(z).
	\end{equation}
	{\color{black} Note that in the original definition of relative smoothness the function $g$ does not need to be twice differentiable, but, if it is, then the definition coincides with \eqref{eq:rsmooth}. }
	Now, recalling that \(D(z)\) is the inverse of \(\nabla^2 \phi(z)\), we may equivalently write
	\[D(z)^{1/2} \nabla^2 g(z) D(z)^{1/2} \preceq L_r I,
	\]
	and therefore the best possible \(L_r\) is given by the maximum of the spectral radius of the symmetric matrix \(D(z)^{1/2} \nabla^2 g(z) D(z)^{1/2}\) as \(z\) ranges in the relative interior. Note that this symmetric matrix is similar to the matrix \(D(z)\nabla^2 g(z)\) and therefore shares its eigenvalues. For \(f(z) = (1/2) z^T B^T B z\), \(D(z)\nabla^2 f(z) = D(z)B^TB\) and therefore \(L_r\) is the maximum eigenvalue of
	\(D(z)B^TB\) (while as pointed out above \(L_f\) is the maximum of the entries of \(B^TB\)).
	It may then be conjectured that the assumption of relative smoothness of \(f\) as in \eqref{eq:smooth} could be replaced by the assumption that the objective function be \(L_r\)-relatively smooth and that the algorithms may be operated with learning rates based on using the value of \(L_r\) rather than on the value of \(L_f\).
	
	To investigate this conjecture, we revisit the experiment in Figure~\ref{fig:secondexperiment}. There, as mentioned before,
	\(x^\star\) has 323 vanishing entries. In addition the maximum entry of \(x^\star\) happens to be \(5.9\times10^{-3}\) (since the entries add up to 1 they might be expected to be small). Thus, the entries of \(D(x^\star)B^TB\)  are more than two orders of magnitude smaller than those \(B^TB\). We introduced the symmetric matrix
	\[M = D(x^\star)^{1/2}B^TBD(x^\star)^{1/2}\]
	and estimated {\color{black} roughly} \(L_r\) by the spectral radius \(\rho(M)\).\footnote{Of course, this would not make sense in a real application, as \(M\) requires the knowledge of \(x^\star\). In practice \(L_r\) could be estimated as the spectral radius of
		\( D(x_k)^{1/2}B^TBD(x_k)^{1/2}\) for a suitable {\color{black} range of values of \(k\)}. }
	Then we used the learning rate \(h=1/(L_\chi L_r)\) (rather than \(1/(L_\chi L_f)\)) for mirror descent and AMD and \[h = \sqrt{\epsilon/\big(2(1+d\epsilon)\rho(M)\big) \gamma}\] for AMDR. Figure~\ref{fig:thirdexperiment} has the same realizations of the random elements \(B^TB\) and  \(x_0\) as Figure~\ref{fig:secondexperiment}, the only difference being that we now use the \(L_r\)-based values of the learning rates just described; these turn out to be
	\(h \approx 1.3\times 10^{-1}\) for mirror descent and AMD and \(h\approx 8.1\times 10^{-3}\) for AMDR. In Figure~\ref{fig:thirdexperiment} each optimization method qualitatively behaves very much as it did in Figure~\ref{fig:secondexperiment}; however for each method the size of \(f(x_k)-f(x^\star)\) for given  \(k\) is now clearly smaller than it was.
	
	This experiment shows the interest of future analyses of AMDR and AMD  replacing the notion of absolute smoothness by the notion of relative smoothness, similarly to what it is done {\color{black} with mirror descent} in \cite[Theorem 3.1]{LFN18}.
	
	{\color{black}
	\subsection{Convex Learning Problem constrained to an $\ell_p$ ball}
	
	We observe a dataset $\{(x_i,y_i)\}_{i=1}^n$ with feature vectors
	$x_i \in \bbE^\star$ and binary responses $y_i \in \{-1,+1\}$.
	Throughout, we model the conditional distribution of $y_i$ given $x_i$
	via a standard logistic model,
	\[
	\mathbb{P}(y_i = 1 \mid x_i)
	= \sigma(\langle w^\star, x_i\rangle),
	\qquad
	\sigma(z) = \frac{1}{1+\exp(-z)},
	\]
	so that $y_i$ reflects a noisy sign of the underlying linear score
	$\langle w^\star, x_i\rangle$ for some fixed but unknown vector $w^\star \in \bbE$.
	We consider linear predictors $g_w(x)=\langle w,x\rangle$ with
	parameters $w \in \mathbb{E}$, and we aim to determine $w$ that
	minimises the empirical risk
	\[
	f(w)
	= \frac{1}{n}\sum_{i=1}^n
	\varphi(\langle w,x_i\rangle, y_i),
	\]
	where $\varphi$ is the logistic loss $\varphi(t,y)=\log(1+\exp(-y t))$.
	
	We impose that the predictor $g_w$ is $r$--Lipschitz with respect to the
	$\|\cdot\|_q$ norm for some $q\ge2$, which is equivalent to requiring
	$\|w\|_p \le r$ with $p = q/(q-1)$.
	Thus $w$ is constrained to the $\ell_p$ ball of radius $r$, and we
	endow $\bbE$ with the $\|\cdot\|_p$ norm, defining the feasible set
	\[
	\mathcal{X}
	= \{\, w \in \bbE : \|w\|_p \le r \,\}.
	\]
	
	{\color{black} This constraint is equivalent to using $\|w\|_p^p$ as a penalization term which is known as bridge regression for general $p$, and was first considered by \cite{frank1993statistical}, an algorithm based on Newton's method to solve the problem  proposed in \cite{Fu1998397}, with asymptotics studied in \cite{knight2000asymptotics}. }
	
	Note that if $t\mapsto \varphi(t,y)$ is continuously twice differentiable with gradient Lipschitz constant $L_\varphi$ uniformly in $y$ then
	\begin{equation*}
		L_f = \frac{L_\varphi}{n} \lVert X^TX\rVert_{p,q}
	\end{equation*}
	where $X = (x_1,\ldots,x_n)$ and $\lVert \cdot\rVert_{p,q}$ is the induced matrix norm from $\ell_p$ to $\ell_q$.
	
	We will use the mirror map {\color{black}$\chi= \nabla \psi^\star$, $\psi=\phi+\delta_{\cal X}$} corresponding to
	\begin{equation*}
		\phi(w) = -\log(r^p-\lVert w\rVert_p^p).
	\end{equation*}
	Note that $\phi$ is $\mu_\phi$-strongly convex with respect to the $\lVert \cdot \rVert_p$ norm and {\color{black}
	$\mu_\phi = (p-1)=/{r^p}$.}
	Then we have $L_\chi = 1/\mu_\phi$.

For each $\zeta \in \bbE^\star$ we calculate $\chi(\zeta)$ by solving the equation $\nabla \phi(w)= \zeta$, that is {\color{black} by solving} the system
	\begin{equation*}
		\zeta_i = \partial_{w_i} \phi(w) = \frac{p|w_i|^{p-1}\mathrm{sign}(w_i)}{r^p-\lVert w\rVert_p^p}, \quad i=1,\ldots,n.
	\end{equation*}
	Observe that this implies $\mathrm{sign}(w_i) = \mathrm{sign}(\zeta_i)$ and
	\begin{equation*}
		|\zeta_i| =  \frac{p|w_i|^{p-1}}{r^p-\lVert w\rVert_p^p}, \quad i=1,\ldots,n.
	\end{equation*}
	Taking the $q$-th power and summing over $i$ gives
	\begin{equation*}
		\lVert \zeta\rVert_q^q =  \frac{p^q\lVert w\rVert_p^p}{(r^p-\lVert w\rVert_p^p)^q}.
	\end{equation*}
	Therefore the solution $\chi(\zeta)$ is given by
	\begin{align*}
		\chi(\zeta)_i = \left(\frac{u|\zeta_i|}{p}\right)^{\frac{1}{p-1}} \mathrm{sign}(\zeta_i),
	\end{align*}
	where $u=r^p-\lVert w\rVert_p^p$ is the solution of the scalar equation $G(u)=0$ with $u\in [0,r^p]$ and
	\begin{equation*}
		G(u) = \frac{\lVert \zeta\rVert_q^q}{p^q} u^q -r^p+u.
	\end{equation*}
	Observe that $G$ is strictly increasing on $[0,r^p]$, $G(0)<0$ and $G(r^p)>0$ therefore there is a unique solution. While there is not an explicit solution to $G(u)=0$, as this is a scalar equation it can be solved efficiently.
	
	For AMDR we also need to select a regularizer $R$.  As suggested in \cite{KBB15} we {\color{black} first consider choices where $R(u,v) = D_g(u,v)$ is the Bregman divergence} induced by a function $g$ which is $\ell_R$ strongly convex and $L_R$-smooth {\color{black} as in \eqref{eq:hypothesesonR}}. The choice $g=\psi$ is not suitable since $\psi$ is not smooth. A valid choice would be to set $g=\frac{1}{2}\lVert \cdot\rVert_2$, {\color{black} so that the Bregman divergence is the Euclidean distance; unfortunately} this leads to $\ell_R = d^{-\frac{1}{p}+\frac{1}{2}}$ {\color{black} and therefore} for large dimension the step size needs to be very small {\color{black} which implies poor pereformance}. Alternatively, we could {\color{black} think of using} a mirror map which incorporates the geometry of $\bbE$ such as $g_2(w) = ||w||^2$ or $g_p(w)=||w||^p$; however $g_2$ is not strongly convex and $g_p$ is not smooth.
	
	On the other hand it is possible to use $R(x,y) = \frac{1}{2} \lVert x-y\rVert_p^2$ which satisfies the convexity and smoothness conditions with $\ell_R=L_R=1$. With this choice of regularizer the final step of AMDR is given by
	$$x_{k+1} \gets {\rm arg min}_{x\in \cal X}\big(\gamma h \langle \nabla f(y_k),x \rangle+\frac{1}{2} \lVert x-y_k\rVert_p^2 \big)$$
	which is computationally demanding to solve because the objective involves the square of the $\lVert \cdot \rVert_p$-norm, which is non‑quadratic and non‑separable when $p\neq 2$. Its gradient is continuous but not Lipschitz, and the Hessian contains terms of the form $|x_i-y_i|^{p-2}$, which become unbounded for \(x_i-y_i\) near zero when $p<2$. At the same time, the constraint $\lVert x\rVert_p\leq r$ introduces a nonlinear boundary whose Karush-Kuhn-Tucker conditions cannot be solved in closed form unless $p=2$. As a result, each evaluation of the optimality conditions requires solving a non-linear, coupled system where neither the objective nor the constraint decouples across coordinates. Therefore, in this case we have chosen not to include AMDR as experiments proved it to be too expensive per iteration to be competitive.
	
	For our experiment we generate independent observations $\{(x_i,y_i)\}_{i=1}^n$ as follows. Each feature vector $x_i \in \mathbb{R}^d$ is drawn from a standard Gaussian distribution, $x_i \sim \mathcal{N}(0,I_d)$. We sample a ground--truth parameter vector $w^\star \in \mathbb{R}^d$ independently from the same distribution. The initial $w_0$ is set to be the zero vector, while $d=100$, $n=500$ and $p=1.5$. We run both MD and AMD with $h=(L_\chi L_f)^{-1}$. In Figure~\ref{fig:convexlearningp15} we plot the convergence of the objective to the minimizer which is calculated using a long run of projected gradient descent. We see in this case MD converges with the rate $1/k$ while AMD converges with the accelerated rate $1/k^2$.
	\begin{figure}
		\centering
		\includegraphics[width=\linewidth]{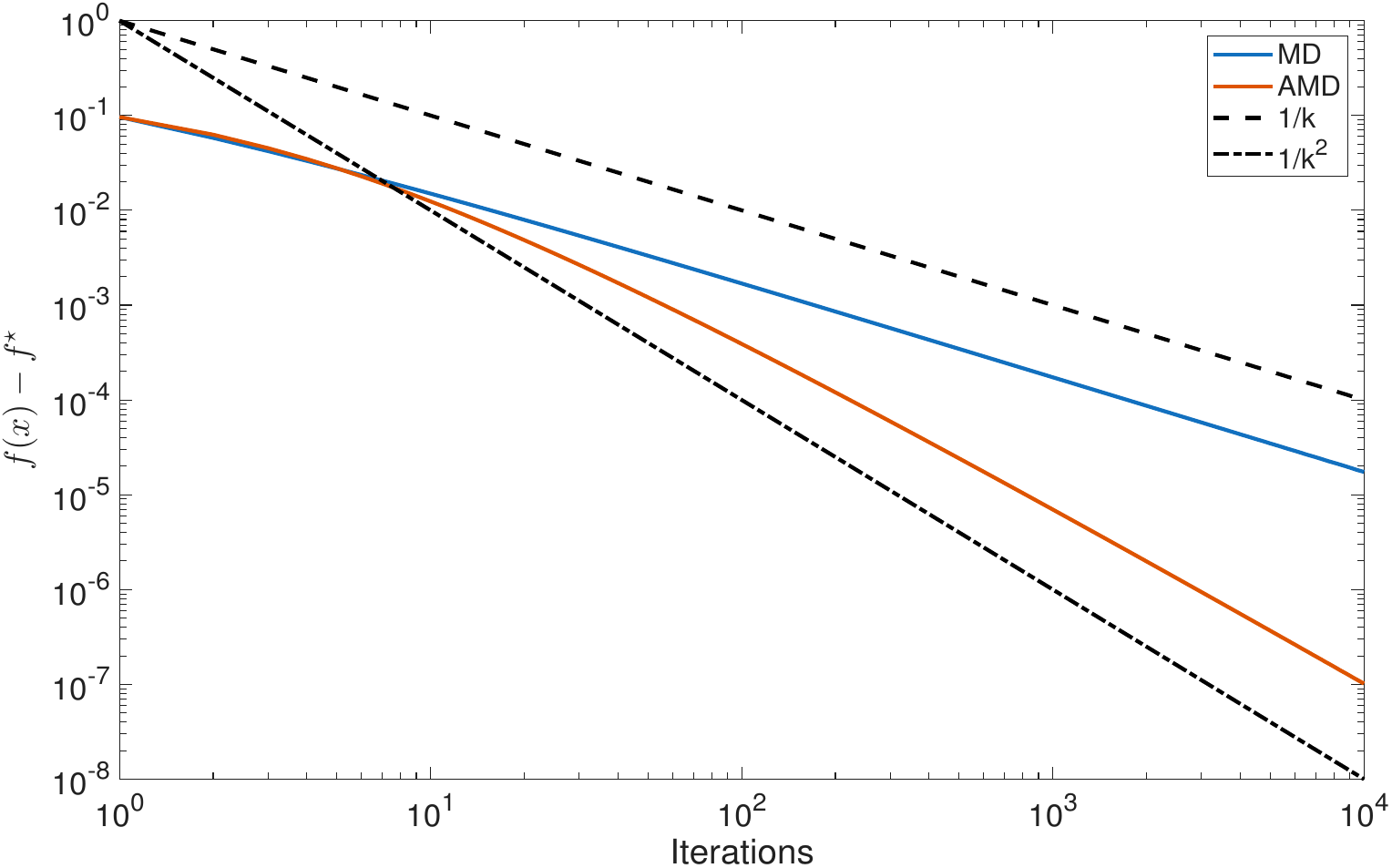}
		\caption{Convex learning problem constrained to an $\ell_p$ ball, $f(X_k)-f(x^*)$ vs. $k$. The dotted line has a slope corresponding to a decay $1/k$ and $1/k^2$}
		\label{fig:convexlearningp15}
	\end{figure}
}


\section*{Funding and/or Conflicts of interests}
The authors have no competing interests to declare that are relevant to the content of this article. JMS has been funded by Ministerio de Ciencia e Innovaci\'{o}n (Spain), project PID2022-136585NB-C21, MCIN/AEI/10.13039/501100011033/FEDER, UE. PD and KCZ acknowledge support from the EPSRC grant EP/\-V006177/1.
	
\vskip 0.2in

\begin{thebibliography}{35}
\ifx \bisbn   \undefined \def \bisbn  #1{ISBN #1}\fi
\ifx \binits  \undefined \def \binits#1{#1}\fi
\ifx \bauthor  \undefined \def \bauthor#1{#1}\fi
\ifx \batitle  \undefined \def \batitle#1{#1}\fi
\ifx \bjtitle  \undefined \def \bjtitle#1{#1}\fi
\ifx \bvolume  \undefined \def \bvolume#1{\textbf{#1}}\fi
\ifx \byear  \undefined \def \byear#1{#1}\fi
\ifx \bissue  \undefined \def \bissue#1{#1}\fi
\ifx \bfpage  \undefined \def \bfpage#1{#1}\fi
\ifx \blpage  \undefined \def \blpage #1{#1}\fi
\ifx \burl  \undefined \def \burl#1{\textsf{#1}}\fi
\ifx \doiurl  \undefined \def \doiurl#1{\url{https://doi.org/#1}}\fi
\ifx \betal  \undefined \def \betal{\textit{et al.}}\fi
\ifx \binstitute  \undefined \def \binstitute#1{#1}\fi
\ifx \binstitutionaled  \undefined \def \binstitutionaled#1{#1}\fi
\ifx \bctitle  \undefined \def \bctitle#1{#1}\fi
\ifx \beditor  \undefined \def \beditor#1{#1}\fi
\ifx \bpublisher  \undefined \def \bpublisher#1{#1}\fi
\ifx \bbtitle  \undefined \def \bbtitle#1{#1}\fi
\ifx \bedition  \undefined \def \bedition#1{#1}\fi
\ifx \bseriesno  \undefined \def \bseriesno#1{#1}\fi
\ifx \blocation  \undefined \def \blocation#1{#1}\fi
\ifx \bsertitle  \undefined \def \bsertitle#1{#1}\fi
\ifx \bsnm \undefined \def \bsnm#1{#1}\fi
\ifx \bsuffix \undefined \def \bsuffix#1{#1}\fi
\ifx \bparticle \undefined \def \bparticle#1{#1}\fi
\ifx \barticle \undefined \def \barticle#1{#1}\fi
\ifx \bconfdate \undefined \def \bconfdate #1{#1}\fi
\ifx \botherref \undefined \def \botherref #1{#1}\fi
\ifx \url \undefined \def \url#1{\textsf{#1}}\fi
\ifx \bchapter \undefined \def \bchapter#1{#1}\fi
\ifx \bbook \undefined \def \bbook#1{#1}\fi
\ifx \bcomment \undefined \def \bcomment#1{#1}\fi
\ifx \oauthor \undefined \def \oauthor#1{#1}\fi
\ifx \citeauthoryear \undefined \def \citeauthoryear#1{#1}\fi
\ifx \endbibitem  \undefined \def \endbibitem {}\fi
\ifx \bconflocation  \undefined \def \bconflocation#1{#1}\fi
\ifx \arxivurl  \undefined \def \arxivurl#1{\textsf{#1}}\fi
\csname PreBibitemsHook\endcsname

\bibitem[\protect\citeauthoryear{Polyak}{1987}]{P87}
\begin{bbook}
\bauthor{\bsnm{Polyak}, \binits{B.T.}}:
\bbtitle{Introduction to Optimization}.
\bpublisher{Optimization Software},
\blocation{New York}
(\byear{1987})
\end{bbook}
\endbibitem

\bibitem[\protect\citeauthoryear{Boyd and Vandenberghe}{2004}]{BV04}
\begin{bbook}
\bauthor{\bsnm{Boyd}, \binits{S.}},
\bauthor{\bsnm{Vandenberghe}, \binits{L.}}:
\bbtitle{Convex Optimization}.
\bpublisher{Cambridge University press},
(\byear{2004})
\end{bbook}
\endbibitem

\bibitem[\protect\citeauthoryear{Nocedal and Wright}{2006}]{NW06}
\begin{bbook}
\bauthor{\bsnm{Nocedal}, \binits{J.}},
\bauthor{\bsnm{Wright}, \binits{S.}}:
\bbtitle{Numerical Optimization},
\bedition{2}nd edn.
\bsertitle{Springer Series in Operations Research and Financial Engineering}.
\bpublisher{Springer},
\blocation{New York, NY}
(\byear{2006})
\end{bbook}
\endbibitem

\bibitem[\protect\citeauthoryear{Beck}{2017}]{B17}
\begin{bbook}
\bauthor{\bsnm{Beck}, \binits{A.}}:
\bbtitle{First-Order Methods in Optimization}.
\bpublisher{Society for Industrial and Applied Mathematics},
\blocation{Philadelphia, PA}
(\byear{2017})
\end{bbook}
\endbibitem

\bibitem[\protect\citeauthoryear{Wright and Recht}{2022}]{WR22}
\begin{bbook}
\bauthor{\bsnm{Wright}, \binits{S.J.}},
\bauthor{\bsnm{Recht}, \binits{B.}}:
\bbtitle{Optimization for Data Analysis}.
\bpublisher{Cambridge University Press}, 
(\byear{2022})
\end{bbook}
\endbibitem

\bibitem[\protect\citeauthoryear{Nesterov}{2014}]{N14}
\begin{bbook}
\bauthor{\bsnm{Nesterov}, \binits{Y.}}:
\bbtitle{Introductory Lectures on Convex Optimization: A Basic Course},
\bedition{1}st edn.
\bpublisher{Springer},
(\byear{2014})
\end{bbook}
\endbibitem

\bibitem[\protect\citeauthoryear{Bubeck}{2015}]{B14}
\begin{barticle}
\bauthor{\bsnm{Bubeck}, \binits{S.}}:
\batitle{Convex optimization: Algorithms and complexity}.
\bjtitle{Found. Trends Mach. Learn.}
\bvolume{8}(\bissue{3--4}),
\bfpage{231}--\blpage{357}
(\byear{2015})
\end{barticle}
\endbibitem

\bibitem[\protect\citeauthoryear{Krichene et~al.}{2015}]{KBB15}
\begin{bchapter}
\bauthor{\bsnm{Krichene}, \binits{W.}},
\bauthor{\bsnm{Bayen}, \binits{A.}},
\bauthor{\bsnm{Bartlett}, \binits{P.L.}}:
\bctitle{Accelerated mirror descent in continuous and discrete time}.
In: \bbtitle{Adv. Neural Inf. Process Syst.},
vol. \bseriesno{28},
pp. \bfpage{2845}--\blpage{2853}
(\byear{2015})
\end{bchapter}
\endbibitem

\bibitem[\protect\citeauthoryear{Tseng}{2008}]{T08}
\begin{botherref}
\oauthor{\bsnm{Tseng}, \binits{P.}}:
On accelerated proximal gradient methods for convex-concave optimization.
Technical report, MIT, Cambridge
\textbf{2}(3)
(2008)
\end{botherref}
\endbibitem

\bibitem[\protect\citeauthoryear{Su et~al.}{2016}]{SBC16}
\begin{barticle}
\bauthor{\bsnm{Su}, \binits{W.}},
\bauthor{\bsnm{Boyd}, \binits{S.}},
\bauthor{\bsnm{Cand{{\`e}}s}, \binits{E.J.}}:
\batitle{A differential equation for modeling {N}esterov's accelerated gradient
  method: Theory and insights}.
\bjtitle{J. Mach. Learn. Res.}
\bvolume{17}(\bissue{153}),
\bfpage{1}--\blpage{43}
(\byear{2016})
\end{barticle}
\endbibitem

\bibitem[\protect\citeauthoryear{Scieur et~al.}{2016}]{Scieur:2016}
\begin{bchapter}
\bauthor{\bsnm{Scieur}, \binits{D.}},
\bauthor{\bsnm{{d'Aspremont}}, \binits{A.}},
\bauthor{\bsnm{Bach}, \binits{F.}}:
\bctitle{Regularized nonlinear acceleration}.
In: \bbtitle{Adv. Neural Inf. Process Syst.},
vol. \bseriesno{30},
pp. \bfpage{712}--\blpage{720}
(\byear{2016})
\end{bchapter}
\endbibitem

\bibitem[\protect\citeauthoryear{Wilson et~al.}{2021}]{WRJ21}
\begin{barticle}
\bauthor{\bsnm{Wilson}, \binits{A.C.}},
\bauthor{\bsnm{Recht}, \binits{B.}},
\bauthor{\bsnm{Jordan}, \binits{M.I.}}:
\batitle{A {L}yapunov analysis of accelerated methods in optimization}.
\bjtitle{J. Mach. Learn. Res.}
\bvolume{22}(\bissue{113}),
\bfpage{1}--\blpage{34}
(\byear{2021})
\end{barticle}
\endbibitem

\bibitem[\protect\citeauthoryear{Lessard et~al.}{2016}]{LRP16}
\begin{barticle}
\bauthor{\bsnm{Lessard}, \binits{L.}},
\bauthor{\bsnm{Recht}, \binits{B.}},
\bauthor{\bsnm{Packard}, \binits{A.}}:
\batitle{Analysis and design of optimization algorithms via integral quadratic
  constraints}.
\bjtitle{SIAM J. Optim.}
\bvolume{26}(\bissue{1}),
\bfpage{57}--\blpage{95}
(\byear{2016})
\end{barticle}
\endbibitem

\bibitem[\protect\citeauthoryear{Fazlyab et~al.}{2018}]{FRMP18}
\begin{barticle}
\bauthor{\bsnm{Fazlyab}, \binits{M.}},
\bauthor{\bsnm{Ribeiro}, \binits{A.}},
\bauthor{\bsnm{Morari}, \binits{M.}},
\bauthor{\bsnm{Preciado}, \binits{V.M.}}:
\batitle{Analysis of optimization algorithms via integral quadratic
  constraints: nonstrongly convex problems}.
\bjtitle{SIAM J. Optim.}
\bvolume{28}(\bissue{3}),
\bfpage{2654}--\blpage{2689}
(\byear{2018})
\end{barticle}
\endbibitem

\bibitem[\protect\citeauthoryear{Shi et~al.}{2022}]{SDJ18}
\begin{barticle}
\bauthor{\bsnm{Shi}, \binits{B.}},
\bauthor{\bsnm{Du}, \binits{S.S.}},
\bauthor{\bsnm{Jordan}, \binits{M.I.}},
\bauthor{\bsnm{Su}, \binits{W.J.}}:
\batitle{Understanding the acceleration phenomenon via high-resolution
  differential equations}.
\bjtitle{Math. Program.}
\bvolume{195}(\bissue{1}),
\bfpage{79}--\blpage{148}
(\byear{2022})
\end{barticle}
\endbibitem

\bibitem[\protect\citeauthoryear{Shi et~al.}{2019}]{SDS19}
\begin{bchapter}
\bauthor{\bsnm{Shi}, \binits{B.}},
\bauthor{\bsnm{Du}, \binits{S.S.}},
\bauthor{\bsnm{Su}, \binits{W.}},
\bauthor{\bsnm{Jordan}, \binits{M.I.}}:
\bctitle{Acceleration via symplectic discretization of high-resolution
  differential equations}.
In: \bbtitle{Adv. Neural Inf. Process Syst},
vol. \bseriesno{32},
pp. \bfpage{5744}--\blpage{5752}
(\byear{2019})
\end{bchapter}
\endbibitem

\bibitem[\protect\citeauthoryear{Sanz~Serna and Zygalakis}{2021}]{SSKZ21}
\begin{barticle}
\bauthor{\bsnm{Sanz~Serna}, \binits{J.M.}},
\bauthor{\bsnm{Zygalakis}, \binits{K.C.}}:
\batitle{The connections between {L}yapunov functions for some optimization
  algorithms and differential equations}.
\bjtitle{SIAM J. Numer. Anal.}
\bvolume{59}(\bissue{3}),
\bfpage{1542}--\blpage{1565}
(\byear{2021})
\end{barticle}
\endbibitem

\bibitem[\protect\citeauthoryear{Dobson et~al.}{2025}]{DSZ24}
\begin{barticle}
\bauthor{\bsnm{Dobson}, \binits{P.}},
\bauthor{\bsnm{Sanz-Serna}, \binits{J.M.}},
\bauthor{\bsnm{Zygalakis}, \binits{K.C.}}:
\batitle{On the connections between optimization algorithms, lyapunov
  functions, and differential equations: Theory and insights}.
\bjtitle{SIAM J. Optim.}
\bvolume{35}(\bissue{1}),
\bfpage{537}--\blpage{566}
(\byear{2025})
\end{barticle}
\endbibitem

\bibitem[\protect\citeauthoryear{Cooper and Sayfy}{1980}]{Cooper80}
\begin{botherref}
\oauthor{\bsnm{Cooper}, \binits{G.J.}},
\oauthor{\bsnm{Sayfy}, \binits{A.}}:
Additive methods for the numerical solution of ordinary differential equations.
Math. Comput.
\textbf{35}
(1980)
\end{botherref}
\endbibitem

\bibitem[\protect\citeauthoryear{Cooper and Sayfy}{1983}]{Cooper83}
\begin{botherref}
\oauthor{\bsnm{Cooper}, \binits{G.J.}},
\oauthor{\bsnm{Sayfy}, \binits{A.}}:
Additive {R}unge-{K}utta methods for stiff ordinary differential equations.
Math. Comput.
\textbf{40}
(1983)
\end{botherref}
\endbibitem

\bibitem[\protect\citeauthoryear{Chen et~al.}{2026}]{Mirror26}
\begin{botherref}
\oauthor{\bsnm{Chen}, \binits{L.}},
\oauthor{\bsnm{Luo}, \binits{H.}},
\oauthor{\bsnm{Wei}, \binits{J.}},
\oauthor{\bsnm{Xu}, \binits{Z.}},
\oauthor{\bsnm{Yao}, \binits{Y.}}:
Accelerated mirror descent method through variable and operator splitting.
arXiv:2601.19038
(2026)
\end{botherref}
\endbibitem

\bibitem[\protect\citeauthoryear{Yuan and Zhang}{2025}]{YXZ25}
\begin{barticle}
\bauthor{\bsnm{Yuan}, \binits{Y.-X.}},
\bauthor{\bsnm{Zhang}, \binits{Y.}}:
\batitle{Analyze accelerated mirror descent via high-resolution odes}.
\bjtitle{J. Oper. Res. Soc. China}
\bvolume{13}(\bissue{4}),
\bfpage{873}--\blpage{899}
(\byear{2025})
\end{barticle}
\endbibitem

\bibitem[\protect\citeauthoryear{S{\"u}li and Mayers}{2003}]{suli}
\begin{bbook}
\bauthor{\bsnm{S{\"u}li}, \binits{E.}},
\bauthor{\bsnm{Mayers}, \binits{D.F.}}:
\bbtitle{An Introduction to Numerical Analysis}.
\bpublisher{Cambridge University Press}, 
(\byear{2003})
\end{bbook}
\endbibitem

\bibitem[\protect\citeauthoryear{Nemirovsky and Yudin}{1983}]{NY83}
\begin{bbook}
\bauthor{\bsnm{Nemirovsky}, \binits{A.S.}},
\bauthor{\bsnm{Yudin}, \binits{D.B.}}:
\bbtitle{Problem Complexity and Method Efficiency in Optimization}.
\bpublisher{Wiley}, 
(\byear{1983})
\end{bbook}
\endbibitem

\bibitem[\protect\citeauthoryear{Krichene et~al.}{2016}]{KBB16}
\begin{bchapter}
\bauthor{\bsnm{Krichene}, \binits{W.}},
\bauthor{\bsnm{Bayen}, \binits{A.}},
\bauthor{\bsnm{Bartlett}, \binits{P.L.}}:
\bctitle{Adaptive averaging in accelerated descent dynamics}.
In: \bbtitle{Advances in Neural Information Processing Systems},
vol. \bseriesno{29}
(\byear{2016})
\end{bchapter}
\endbibitem

\bibitem[\protect\citeauthoryear{Weibull}{1997}]{W97}
\begin{bbook}
\bauthor{\bsnm{Weibull}, \binits{J.W.}}:
\bbtitle{Evolutionary Game Theory}.
\bpublisher{MIT Press}, 
(\byear{1997})
\end{bbook}
\endbibitem

\bibitem[\protect\citeauthoryear{Aubin}{1990}]{JP90}
\begin{barticle}
\bauthor{\bsnm{Aubin}, \binits{J.-P.}}:
\batitle{A survey of viability theory}.
\bjtitle{SIAM J. Control Optim.}
\bvolume{28}(\bissue{4}),
\bfpage{749}--\blpage{788}
(\byear{1990})
\end{barticle}
\endbibitem

\bibitem[\protect\citeauthoryear{Krichene et~al.}{2015}]{KKB15}
\begin{bchapter}
\bauthor{\bsnm{Krichene}, \binits{W.}},
\bauthor{\bsnm{Krichene}, \binits{S.}},
\bauthor{\bsnm{Bayen}, \binits{A.}}:
\bctitle{Efficient {B}regman projections onto the simplex}.
In: \bbtitle{Proc. IEEE Conf. Decis. Control.},
pp. \bfpage{3291}--\blpage{3298}
(\byear{2015})
\end{bchapter}
\endbibitem

\bibitem[\protect\citeauthoryear{Mertikopoulos and Staudigl}{2018}]{Merti}
\begin{barticle}
\bauthor{\bsnm{Mertikopoulos}, \binits{P.}},
\bauthor{\bsnm{Staudigl}, \binits{M.}}:
\batitle{On the convergence of gradient-like flows with noisy gradient input}.
\bjtitle{SIAM J. Optim.}
\bvolume{28}(\bissue{1}),
\bfpage{163}--\blpage{197}
(\byear{2018})
\end{barticle}
\endbibitem

\bibitem[\protect\citeauthoryear{Lu et~al.}{2018}]{LFN18}
\begin{botherref}
\oauthor{\bsnm{Lu}, \binits{H.}},
\oauthor{\bsnm{Freund}, \binits{R.M.}},
\oauthor{\bsnm{Nesterov}, \binits{Y.}}:
Relatively smooth convex optimization by first-order methods, and applications.
SIAM J. Optim.,
333--354
(2018)
\end{botherref}
\endbibitem

\bibitem[\protect\citeauthoryear{Bauschke et~al.}{2017}]{BBT17}
\begin{botherref}
\oauthor{\bsnm{Bauschke}, \binits{H.H.}},
\oauthor{\bsnm{Bolte}, \binits{J.}},
\oauthor{\bsnm{Teboulle}, \binits{M.}}:
A descent lemma beyond lipschitz gradient continuit: First order methods
  revisited and applications.
Math. Oper. Res.,
330--348
(2017)
\end{botherref}
\endbibitem

\bibitem[\protect\citeauthoryear{Frank and
  Friedman}{1993}]{frank1993statistical}
\begin{barticle}
\bauthor{\bsnm{Frank}, \binits{L.E.}},
\bauthor{\bsnm{Friedman}, \binits{J.H.}}:
\batitle{A statistical view of some chemometrics regression tools}.
\bjtitle{Technometrics}
\bvolume{35}(\bissue{2}),
\bfpage{109}--\blpage{135}
(\byear{1993})
\end{barticle}
\endbibitem

\bibitem[\protect\citeauthoryear{Fu}{1998}]{Fu1998397}
\begin{barticle}
\bauthor{\bsnm{Fu}, \binits{W.J.}}:
\batitle{Penalized regressions: The bridge versus the lasso?}
\bjtitle{J. Comput. Graph. Stat.}
\bvolume{7}(\bissue{3}),
\bfpage{397}--\blpage{416}
(\byear{1998})
\end{barticle}
\endbibitem

\bibitem[\protect\citeauthoryear{Knight and Fu}{2000}]{knight2000asymptotics}
\begin{botherref}
\oauthor{\bsnm{Knight}, \binits{K.}},
\oauthor{\bsnm{Fu}, \binits{W.}}:
Asymptotics for lasso-type estimators.
Ann. Stat.,
1356--1378
(2000)
\end{botherref}
\endbibitem

\bibitem[\protect\citeauthoryear{Wibisono et~al.}{2016}]{WWJ16}
\begin{barticle}
\bauthor{\bsnm{Wibisono}, \binits{A.}},
\bauthor{\bsnm{Wilson}, \binits{A.C.}},
\bauthor{\bsnm{Jordan}, \binits{M.I.}}:
\batitle{A variational perspective on accelerated methods in optimization}.
\bjtitle{Proc. Natl. Acad. Sci.}
\bvolume{113}(\bissue{47}),
\bfpage{7351}--\blpage{7358}
(\byear{2016})
\end{barticle}
\endbibitem

\end{thebibliography}

\section*{Appendix: A primal accelerated ODE and its discretizations}
	\label{subsec:primal_acc_ODE}
	In the literature several ODEs and algorithms have appeared that have similarities to those in \cite{KBB15}. {\color{black} For the sake of completeness, we shall now make explicit these similarities}
in the case of  ODEs appearing in \citep{WWJ16,WRJ21} and one of the algorithms given in \cite{T08}. One notable difference is that, contrary to the setting in \cite{KBB15}, only primal variables are used in those references. Motivated by this, we will now give formulations of the ODE \eqref{eq:Kr_ode} and Algorithms \ref{alg:amd_kr} and \ref{alg:amd_us} that only make use of primal variables. The developments parallel those in Section
	\ref{ss:rewritingprimal}. In this Appendix it is assumed that Assumption \ref{ass:two} holds.
	
	\subsection*{\color{black}Primal writing in the case \({\cal N} = \{0\}\)}
	\paragraph{The differential system}
	
	In the case where $\mathcal{N}$ is trivial, the one-to-one correspondence between \(z=\chi(\zeta)\) and \(\zeta=\nabla \phi(z)\)  may be used to rewrite the system \eqref{eq:Kr_ode} in terms of the primal variable \(z\) as follows:
	\begin{subequations}\label{eq:wode}
		\begin{eqnarray}
			\frac{d}{dt}\nabla \phi(z(t))  &=& -\frac{t}{r} \nabla f(x(t)),\label{eq:wodez}\\
			\dot{x}(t) &=& \frac{r}{t} \big(z(t)-x(t)\big).\label{eq:wodex}
		\end{eqnarray}
	\end{subequations}
	For the value \(r=2\), this system is a particular instance of the systems derived in \cite{WRJ21}  through  Lagrangian functions. Note that when \(\cal N\) is not reduced to \(\{0\}\), the differential equation \eqref{eq:wodez} is meaningless because the left hand-side is constrained to be a vector tangent to the manifold \(\cal M\) and \(\nabla f(x)\) is not constrained in that way (see the right panel in Figure~\ref{fig:simplex_diagram}).
	
	\paragraph{The algorithms}
	
	As is the case for the differential system, when \({\cal N}= \{0\}\), AMDR and AMD may be expressed avoiding dual variables. Restricting the attention to AMD (AMDR may be dealt with analogously), we have
	\begin{eqnarray}
		y_k &=& x_k +\frac{1}{\gamma_k} \big(z_k-x_k),\nonumber\\
		\nabla \phi(z_{k+1})&=& \nabla \phi(z_k) - \gamma_k h\nabla f(y_k),\label{eq:wz}\\
		x_{k+1} &=& y_k +\frac{1}{\gamma_k}\big(z_{k+1}-z_k\big).\nonumber
	\end{eqnarray}
	Formulas similar to \eqref{eq:wz} appear in \citep{WWJ16,WRJ21}. Once \(\nabla \phi(z_{k+1})\) has been found via \eqref{eq:wz}, \(z_{k+1}\) is recovered as
	\(\chi(\nabla \phi(z_{k+1}))\). Note that when  \(\cal N\) is not reduced to \(\{0\}\), \eqref{eq:wz} is meaningless because nothing guarantees that its right hand-side lies in \(\cal M\),
	the set where \(\nabla \phi\) takes values.
	
	\subsection*{Primal writing in the general case}
	\paragraph{The differential system}
	
	It is possible  to obtain a system of differential equations  satisfied by the primal variables \(z(t)=\chi(\zeta(t))\) and \(x(t)\) without the hypothesis \({\cal N}= \{0\}\), where \(\zeta(t)\) and \(x(t)\) are solutions of \eqref{eq:Kr_ode}. In fact,
	by arguing as in Section~\ref{ss:rewritingprimal} and denoting by \(z\) the mirror of the variable \(\zeta\), one may write:
	\begin{subequations}\label{eq:odemine}
		\begin{eqnarray}
			\dot{z}  &=& \chi^\prime(\nabla \phi(z(t)))\left(-\frac{t}{r} \nabla f(x(t))\right),\label{eq:odezmine}\\
			\dot{x} &=& \frac{r}{t} \big(z(t)-x(t)\big);\label{eq:odexmine}
		\end{eqnarray}
	\end{subequations}
	{\color{black} This ODE has appeared before in the literature in \cite{KBB16}}.
	In the particular case where \({\cal N}= \{0\}\), this reduces to \eqref{eq:wode}.
	The system \eqref{eq:odemine} has to be initialized with \(x(0) =z(0)\) in the relative interior of \(\cal X\), and
	from \eqref{eq:Vhat}, it has the Lyapunov function:
	\begin{equation}\label{eq:lyapprimal}
		\frac{t^2}{r^2} \big(f(x)-f(x^\star)\big)+ D_\phi\big(x^\star,z).
	\end{equation}
	
	\begin{example} [Example~\ref{ex:simplex} continued]
		In the case of the simplex, \eqref{eq:odezmine} reads
		\begin{equation}\label{eq:edosimplexbis}
			\dot{z} = D(z) \Big( -\frac{t}{r} \nabla f(x)+ \langle -\frac{t}{r} \nabla f(x) ,z \rangle {\bf 1}\Big),
		\end{equation}
		where \(D(z)\) is the diagonal matrix with entries \(z_i\); \end{example}

	\paragraph{The algorithms}
	It is also possible, without any assumption on the dimension of \(\cal N\), to express AMDR and AMD in terms of the primal points \(z_k=\chi(\zeta_k)\). For brevity we only give details for AMD, which, by arguing as in Section~\ref{ss:rewritingprimal}, may be reformulated as
	\begin{subequations}\label{eq:ALG}
		\begin{eqnarray}
			y_k &=& x_k +\frac{1}{\gamma_k} \big(z_k-x_k),\label{eq:ymine}\\
			z_{k+1}&=& \chi\big(\nabla \phi(z_k) - \gamma_k h\nabla f(y_k)\big),\label{eq:zmine}\\
			x_{k+1} &=& y_k +\frac{1}{\gamma_k}\big(z_{k+1}-z_k\big).\label{eq:xmine}
		\end{eqnarray}
	\end{subequations}
	This is  a particular case of the algorithms studied in \cite{T08}. 
	Note that in \cite{T08} there is no discussion of the continuous time limit; however it is not difficult to show that, when \eqref{eq:asympgamma} is fulfilled, the algorithm \eqref{eq:ALG} is a consistent  discretization of the system \eqref{eq:odemine}.
	Furthermore, in terms of primal variables, the discrete-time Lyapunov function in \eqref{eq:lyapbis} is given by
	\[ (\gamma_k^2-\gamma_k)h \big(f(x_k)-f(x^\star)\big)+ D_\phi\big(x^\star,z_k\big).
	\]
	Under the consistency requirement \eqref{eq:asympgamma},
	this is an approximation to the continuous-time Lyapunov function \eqref{eq:lyapprimal}.

\end{document}